\theoremstyle{definition}
\newtheorem{theorem}{{Theorem}}[section]
\newtheorem{lemma}[theorem]{{Lemma}}
\newtheorem{proposition}[theorem]{{Proposition}}
\newtheorem{corollary}[theorem]{{Corollary}}
\newtheorem{example}[theorem]{{Example}}
\newtheorem{conjecture}[theorem]{{Conjecture}}
\theoremstyle{definition}
\newtheorem{remark}[theorem]{Remark}
\newtheorem{definition}[theorem]{{Definition}}
\newcommand{\KhR}{\mathrm{KhR}} 
\newcommand{\skein}{\mathcal{S}}
\newcommand{\colim}{\mathrm{colim}\,}
\title[Bar-Natan lasagna modules and exotic surfaces]{Bar-Natan skein lasagna modules and exotic surfaces in 4-manifolds}
\author{Ian A. Sullivan}
\date{\today}
\begin{document}

\begin{abstract}
    We construct and study the skein lasagna module obtained by importing the Bar-Natan Khovanov homology package. For 4-manifolds satisfying a non-vanishing condition, we produce pairs of exotic surfaces (with boundary) by using the behavior of skein lasagna gluing maps associated to connect sums of 4-manifolds. We show that one internal stabilization is generally not enough for these exotic knotted surfaces, generalizing results of Hayden to 4-manifolds that contain homologically diverse surfaces admitting primitive fillings.
\end{abstract}

\maketitle

\tableofcontents

\section{Introduction}
The skein lasagna module construction of Morrison--Walker--Wedrich \cite{MWW-lasagna} has proven useful for the production of smooth 4-manifold invariants. The skein lasagna perspective provides a framework for extending link invariants that arise in the link homology setting to invariants of smooth 4-manifolds. For example, the lasagna $s$-invariant of Ren--Willis \cite{RenWillis}, as an extension of Rasmussen's $s$-invariant \cite{rasinv-genus}, is an invariant of smooth $4$-manifolds capable of detecting exotic phenomena.

In this note, we import the $\mathbb{F}_{2}[H]$ link homology theory of \cite{BN-tangles}, often referred to in the literature as \emph{Bar-Natan} homology, and extend invariant properties of this link homology theory to the smooth 4-manifold setting. We study the naturally arising $\mathbb{F}_{2}[H]$-module structure of this invariant, which we denote by $\skein_{0}^{BN}(X;L)$ for a 4-manifold and boundary link pair $(X,L)$. We interpret the effect of multiplication by $H$ as a 3-dimensional 1-handle surgery in $X$ corresponding to attaching a handle to the skein surfaces representing elements of $\skein_{0}^{BN}(X;L)$. Using this interpretation, we study the $H$-torsion elements in this Bar-Natan lasagna setting, and use a \emph{lasagna $H$-torsion order} to extend the internal stabilization results of Hayden \cite{hayden2023atomic}.

Using results about a more general form of this invariant, defined and explored in \cite{mwwsurfaces}, we are able to produce examples of exotically knotted pairs of surfaces in 4-manifolds other than $B^{4}$, that remain exotic after a single \emph{internal stabilization}.  

For a 4-manifold and boundary link pair $(X,L)$, a surface $S$ in $(X,L)$ is called \emph{homologically diverse} if no nontrivial union of the components of $S$ are null-homologous in $(X,L)$ (see Definition \ref{def:homdiv} - \cite{mwwsurfaces}). Given a Bar-Natan lasagna filling of $(X,L)$ represented by a homologically diverse surface, such a filling is called \emph{primitive} if the corresponding element $[S]\in\skein_{0}^{BN}(X;L)$ is not of the form $H\cdot{v}$ for some element $v\in{\skein_{0}^{BN}(X;L)}$ (see Definition \ref{def:primitive}).

\begin{proposition}(Proposition \ref{prop:main})
    Let $(F_{g},F^{\prime}_{g})$ be the pair of genus $g$ exotic surfaces constructed in \cite{hayden2023atomic} with boundary $K_{g}$ (see Figure \ref{fig:0}) and let $(X,L)$ be a 4-manifold and boundary link pair.  Suppose there exists a primitive filling $[S]\in \skein_{0}^{BN}(X;L)$, then there exists an exotically knotted pair $(G_{g},G^{\prime}_{g})$ of surfaces with local boundary $K_{g}$ in $X\setminus{B^{4}}$ that remain exotic after an internal stabilization. 
\end{proposition}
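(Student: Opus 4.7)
The plan is to construct the pair $(G_g, G'_g)$ by tube-summing the primitive filling $S$ with Hayden's exotic pair $(F_g, F'_g)$, and then to distinguish the resulting classes using the $\mathbb{F}_2[H]$-module structure of the Bar-Natan skein lasagna module. I would fix a small $4$-ball $B^4 \subset \mathrm{int}(X)$ disjoint from $S$, place $F_g$ (resp. $F'_g$) in $B^4$ with $\partial F_g = K_g \subset \partial B^4$, and tube $F_g$ (resp. $F'_g$) to $S$ via a small embedded $1$-handle in $X \setminus B^4$. The restriction of the resulting tube sum to $X \setminus B^4$ is a surface with boundary $L$ on $\partial X$ and local boundary $K_g$ on $\partial B^4$; call it $G_g$ (resp. $G'_g$).

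Using the identification of internal stabilization with multiplication by $H$ established earlier in the paper, the pair $(G_g, G'_g)$ remains exotic after one internal stabilization if and only if $H \cdot ([G_g] - [G'_g]) \neq 0$ in $\skein_{0}^{BN}(X \setminus B^4; L \cup K_g)$. This converts the geometric question into an algebraic non-vanishing in the Bar-Natan skein lasagna module.

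The core step would invoke the skein lasagna gluing map from \cite{mwwsurfaces} under the decomposition $X = (X \setminus B^4) \cup_{S^3} B^4$. Tubing with $S$ defines an $\mathbb{F}_2[H]$-linear map $m_S : \skein_{0}^{BN}(B^4; K_g) \to \skein_{0}^{BN}(X \setminus B^4; L \cup K_g)$ sending $[F_g] \mapsto [G_g]$ and $[F'_g] \mapsto [G'_g]$. Combined with a Bar-Natan analogue of Hayden's theorem, namely $H \cdot ([F_g] - [F'_g]) \neq 0$ in $\skein_{0}^{BN}(B^4; K_g)$ (to be proved earlier via the $H$-torsion order framework), the proposition would follow from showing that $m_S$ transports this non-vanishing.

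The main obstacle is precisely this transfer: $m_S$ is not generally injective, so primitivity of $[S]$ must enter explicitly. My plan is to detect non-vanishing through the full gluing pairing by capping $K_g$ in $B^4$ with a suitable dual filling $F^{\vee}$, mapping both classes into $\skein_{0}^{BN}(X; L)$. The image $[G_g \cup F^{\vee}] - [G'_g \cup F^{\vee}]$ becomes a scalar multiple $\alpha \cdot [S]$, where $\alpha \in \mathbb{F}_2[H]$ arises from Bar-Natan closed-surface evaluations of $F_g \cup F^{\vee}$ and $F'_g \cup F^{\vee}$. With an appropriate choice of $F^{\vee}$, Hayden's non-vanishing forces $H\alpha \neq 0$ in $\mathbb{F}_2[H]$, and primitivity of $[S]$ (together with homological diversity ruling out accidental collapse) then forces $H\alpha \cdot [S] \neq 0$, contradicting $H \cdot ([G_g] - [G'_g]) = 0$ after gluing. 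The technical crux is constructing $F^{\vee}$ so that the evaluation $\alpha$ isolates the relevant torsion information and the pairing lands in the free cyclic $\mathbb{F}_2[H]$-submodule generated by $[S]$.
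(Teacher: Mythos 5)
There is a genuine gap in your proposed detection mechanism, and it stems from the very thing that makes the statement nontrivial: the distinguishing element $\delta^{L}_{g}=[F_{g}]-[F'_{g}]$ is \emph{$H$-torsion} in $\skein_{0}^{BN}(B^{4};K_{g})\cong BN(K_{g})$. Your plan is to cap the $K_{g}$ boundary with a fixed filling $F^{\vee}$ of $(B^{4};K_{g})$ and detect $[G_{g}]-[G'_{g}]$ via the scalar $\alpha\in\mathbb{F}_{2}[H]$ that arises from the closed-surface evaluation of $F_{g}\cup F^{\vee}$ versus $F'_{g}\cup F^{\vee}$. But that assignment $\delta^{L}_{g}\mapsto\alpha$ is an $\mathbb{F}_{2}[H]$-linear map $BN(K_{g})\to BN(\emptyset)=\mathbb{F}_{2}[H]$, and $\mathbb{F}_{2}[H]$ is a domain with no $H$-torsion. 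Consequently any $H$-torsion class in $BN(K_{g})$ is sent to zero by capping with \emph{every} $F^{\vee}$, so $\alpha=0$ identically and the pairing sees nothing. No clever choice of $F^{\vee}$, and no amount of primitivity of $[S]$, can rescue this: the information that $\delta^{L}_{g}$ is a nonzero torsion class of order $>1$ simply does not survive a pairing that factors through the torsion-free module $\mathbb{F}_{2}[H]$.

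There is also a construction-level discrepancy. The paper does not tube-sum $F_{g}$ to $S$; it takes the disjoint union $F_{g}\sqcup S$ as a filling of $(X\setminus B^{4};K_{g}\sqcup L)$. The disjoint union is what gives a clean tensor-product element $\delta^{L}_{g}\otimes[S]$ in $\skein_{0}^{BN}(B^{4};K_{g})\otimes\skein_{0}^{BN}(X;L)$. If you tube-sum, the neck-cutting relation turns $F_{g}\# S$ into a sum $F_{g,\bullet}\sqcup S+F_{g}\sqcup S_{\bullet}$, so the image is \emph{not} of the form $\alpha\cdot[S]$, compounding the problem above. The paper's actual argument uses Proposition \ref{prop:BNneckcut}: on the submodule $\mathcal{R}$ the composite $\phi_{\#}\circ\iota$ is invertible, hence preserves lasagna $H$-torsion order, and then the key algebraic point is that tensoring a torsion class $\delta^{L}_{g}$ with a \emph{free} generator $[S]$ over $\mathbb{F}_{2}[H]$ neither introduces nor kills $H$-torsion. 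Primitivity of $[S]$ enters precisely here — as the freeness/no-$H$-divisibility hypothesis that makes $\delta^{L}_{g}\otimes[S]$ inherit the torsion order of $\delta^{L}_{g}$ — not as a hypothesis feeding into a pairing argument.
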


In Section \ref{sec:4}, we provide a method for determining if a Bar-Natan filling is primitive by using existing non-vanishing results for $\skein_{0}^{2}(X;L)$ over $\mathbb{F}_{2}$ coefficients. Furthermore, if the the original surface representing the skein surface of a primitive filling is known, then the exotically knotted pair $(G_{g},G_{g}^{\prime})$ may be constructed explicitly. In the end of Section \ref{sec:4}, we provide an example for the self intersection -1 sphere $\overline{\mathbb{C}P^{1}}$ in $\overline{\mathbb{C}P^{2}}$.

\begin{corollary}
    For the exotically knotted pair $(F_{g},F_{g}^{\prime})$ in Proposition \ref{prop:main}, the pair $(F_{g}\sqcup \overline{\mathbb{C}P^{1}}$, $F_{g}^{\prime}\sqcup \overline{\mathbb{C}P^{1}})$ is exotic in $(\overline{\mathbb{C}P^{2}}\setminus{B^{4}},K_{g})$, where $K_{g}$ is a boundary knot in the $S^{3}$ boundary of the removed 4-ball. Furthermore, one internal stabilization is not enough for this pair of surfaces.
\end{corollary}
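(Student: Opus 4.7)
The plan is to apply Proposition \ref{prop:main} to the pair $(X,L) = (\overline{\mathbb{C}P}^2, \emptyset)$ using the surface $S = \overline{\mathbb{C}P}^1 \subset \overline{\mathbb{C}P}^2$ as the candidate primitive filling. Two things must be checked in order to invoke the proposition: that $\overline{\mathbb{C}P}^1$ is homologically diverse in $\overline{\mathbb{C}P}^2$, and that its class in $\skein_0^{BN}(\overline{\mathbb{C}P}^2)$ is primitive.

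Homological diversity is immediate and essentially free: $\overline{\mathbb{C}P}^1$ is connected and represents a generator of $H_2(\overline{\mathbb{C}P}^2;\Z) \cong \Z$, so there is no nontrivial union of its components that is null-homologous. This is the easy step.

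For primitivity, I would invoke the criterion developed in Section \ref{sec:4}, which reduces the question of whether a Bar-Natan filling $[S]\in \skein_0^{BN}(X;L)$ is not of the form $H\cdot v$ to a non-vanishing statement for the corresponding class in $\skein_0^2(X;L)$ over $\BF_2$. The class of $\overline{\mathbb{C}P}^1$ is known to be nonzero in $\skein_0^2(\overline{\mathbb{C}P}^2)$ with $\BF_2$ coefficients — this is precisely the kind of non-vanishing result cited in Section \ref{sec:4} as input to the primitivity criterion. Feeding this through the comparison map between the $\BF_2[H]$-theory and the $\BF_2$-theory then forces $[\overline{\mathbb{C}P}^1]$ not to lie in $H\cdot \skein_0^{BN}(\overline{\mathbb{C}P}^2)$.

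With both conditions established, Proposition \ref{prop:main} directly produces an exotic pair $(G_g, G_g')$ in $\overline{\mathbb{C}P}^2 \setminus B^4$ with local boundary $K_g$ that remains exotic after a single internal stabilization. The explicit identification $(G_g, G_g') = (F_g \sqcup \overline{\mathbb{C}P}^1,\, F_g' \sqcup \overline{\mathbb{C}P}^1)$ is read off from the construction in the proof of Proposition \ref{prop:main}, since the construction takes the local exotic pair $(F_g, F_g')$ and disjoint-unions it with any chosen surface representative of the primitive filling class; in the present case the obvious representative is the surface $\overline{\mathbb{C}P}^1$ itself. I expect the main obstacle to be the primitivity verification, specifically the bookkeeping of transferring the existing $\BF_2$ non-vanishing of the sphere class across the comparison used in Section \ref{sec:4}; the rest is a direct application of the earlier results.
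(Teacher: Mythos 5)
Your proposal matches the paper's own proof (given as Example \ref{ex:cp2bar}) essentially step for step: homological diversity is read off from $H_2(\overline{\mathbb{C}P}^2)\cong\Z$, primitivity of $[\overline{\mathbb{C}P}^1]$ is established by combining the Ren--Willis non-vanishing in $\skein_0^2(\overline{\mathbb{C}P}^2;\emptyset;\BF_2)$ (Remark \ref{rmk:RWcp2bar}) with the comparison map $F$ of Lemma \ref{lem:linmapBNtoKhR} via Lemma \ref{lem:primitiveskein}, and the conclusion is obtained by feeding this primitive filling into Proposition \ref{prop:main} (together with Proposition \ref{prop:BNneckcut}). The only detail you gloss over, which the paper handles through Remark \ref{rmk:f+}, is the explicit choice of filling for $\overline{\mathbb{C}P}^1$ (puncturing the sphere and labeling the resulting unknot by $f_+$), but this does not change the logical structure of the argument.
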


Thus, the skein lasagna module construction detects exotically knotted surfaces. The invariant $\skein_{0}^{BN}(X;L)$ can also be used to show that exotically knotted pairs of local surfaces remain exotically knotted in 4-manifolds other than $B^{4}$. More generally, for exotically knotted pairs, the main proposition of this work implies the following.

\begin{corollary}
    Let $(\Sigma_{0},\Sigma_{1})$ be an exotically knotted pair of surfaces in $(B^{4},L)$ that induce distinct maps on Bar-Natan homology, and let $[S]$ be a primitive filling corresponding to a surface $S\subset{(X,L^{\prime})}$, then $(\Sigma_{0}\sqcup S)$ and $(\Sigma_{1}\sqcup S)$ are not smoothly isotopic in $(X\setminus{B^{4}};L\sqcup L^{\prime})$.
\end{corollary}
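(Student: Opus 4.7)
The plan is to adapt the contradiction argument from the exoticness portion of Proposition~\ref{prop:main}. Suppose, toward contradiction, that $\Sigma_0 \sqcup S$ and $\Sigma_1 \sqcup S$ are smoothly isotopic in $(X \setminus B^4;\, L \sqcup L')$. By the isotopy invariance of Bar-Natan skein lasagna classes, this would give
\[
[\Sigma_0 \sqcup S] \;=\; [\Sigma_1 \sqcup S] \quad \text{in} \quad \skein_0^{BN}(X \setminus B^4;\, L \sqcup L'),
\]
so the goal is to show this equality is incompatible with the distinctness and primitivity hypotheses.

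Next I would exploit the geometric separation of the two pieces: $\Sigma_i$ can be arranged to sit inside a collar $B^4$ of the $S^3$ boundary of $X \setminus B^4$ (the boundary of the removed 4-ball), while $S$ is supported in the complementary region. This separation yields a factorization of the disjoint union through the external-product/gluing map
\[
\mu:\; \skein_0^{BN}(B^4;\, L) \,\otimes_{\BF_2[H]}\, \skein_0^{BN}(X;\, L') \;\longrightarrow\; \skein_0^{BN}(X \setminus B^4;\, L \sqcup L'),
\]
sending $[\Sigma_i] \otimes [S] \mapsto [\Sigma_i \sqcup S]$. This is the same gluing map that appears in the proof of Proposition~\ref{prop:main} in the $(F_g, F_g')$ setting, and the assumed equality translates to $\mu\bigl(([\Sigma_0] - [\Sigma_1]) \otimes [S]\bigr) = 0$.

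To derive the contradiction, I would then combine the two hypotheses. The distinct-maps hypothesis (as in Hayden's examples, where distinctness is already visible over $\BF_2$) gives $[\Sigma_0] - [\Sigma_1] \neq 0$ in $\skein_0^{BN}(B^4; L)$, and moreover survives modulo $H$ as a nonzero class in $\skein_0^2(B^4; L)$. The primitivity of $[S]$ gives a nonzero image in $\skein_0^2(X; L')$. Reducing modulo $H$, the map $\mu$ descends to the analogous gluing map into $\skein_0^2(X \setminus B^4; L \sqcup L')$, and the non-vanishing of the image of $([\Sigma_0] - [\Sigma_1]) \otimes [S]$ there follows from the non-vanishing results for $\skein_0^2$ cited in Section~\ref{sec:4}.

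The main obstacle is the $\BF_2[H]$-module algebra, specifically verifying that $\mu$ does not kill $([\Sigma_0] - [\Sigma_1]) \otimes [S]$ through tensor-product collapses or $H$-torsion on either side. Reducing modulo $H$ -- which is precisely the role of primitivity of $[S]$ -- places the problem into the reduced skein lasagna setting, where the required non-vanishing is already available via the framework of Section~\ref{sec:4}. This is the technical heart of the argument and parallels the corresponding step in the proof of Proposition~\ref{prop:main}.
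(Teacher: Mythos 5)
Your structural outline — factor $\Sigma_i \sqcup S$ through the gluing map $\phi_\#\circ\iota$ and argue $([\Sigma_0]-[\Sigma_1])\otimes[S]\neq 0$ — matches the paper's intent. But the final step contains a genuine gap: you assert that the distinct-maps hypothesis gives a class $[\Sigma_0]-[\Sigma_1]$ that ``survives modulo $H$ as a nonzero class in $\skein_0^2(B^4;L)$.'' That is an extra assumption, not a consequence of the hypothesis. ``Distinct maps on Bar-Natan homology'' only gives $[\Sigma_0]-[\Sigma_1]\neq 0$ in the $\BF_2[H]$-module $BN(L)$; nothing prevents that element from being divisible by $H$, in which case it dies under $-\otimes_{\BF_2[H]}\BF_2[H]/(H)$ and your mod-$H$ argument has nothing left to work with. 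Indeed the elements appearing throughout the paper (Hayden's $\delta_0$ and its relatives $\delta^L_g$) are $H$-torsion classes, so one is exactly in the regime where divisibility questions are delicate. The parenthetical ``(as in Hayden's examples, where distinctness is already visible over $\BF_2$)'' quietly restricts to a special case rather than proving the corollary as stated.

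The paper's route (the same one used to prove Proposition~\ref{prop:main}) does \emph{not} reduce modulo $H$. It works at the $\BF_2[H]$-module level: primitivity of $[S]$ means $[S]$ is a free generator and $H\nmid[S]$, so tensoring the cyclic free summand $\BF_2[H]\cdot[S]$ with $BN(L)$ is exact and $([\Sigma_0]-[\Sigma_1])\otimes[S]$ is nonzero with the same $H$-torsion order as $[\Sigma_0]-[\Sigma_1]$; Proposition~\ref{prop:BNneckcut} then transports this $H$-torsion order (in particular, nonvanishing) through $\phi_\#\circ\iota$ into $\skein_0^{BN}(X\setminus B^4;L\sqcup L')$, forcing $[\Sigma_0\sqcup S]\neq[\Sigma_1\sqcup S]$. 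The map $F$ to $\skein_0^2$ from Lemma~\ref{lem:linmapBNtoKhR} and the criterion in Lemma~\ref{lem:primitiveskein} are used only to \emph{verify} primitivity of $[S]$ in examples, not as the arena for the final nonvanishing. You have conflated these two roles: primitivity is an input to a $\BF_2[H]$-linear argument, not a license to pass everything to $\skein_0^2$.
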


\textbf{Acknowledgments.}
The author would like to thank Melissa Zhang and Eugene Gorsky for their support and guidance. We also thank Qiuyu Ren, Mark Powell, and Trevor Oliveira-Smith for extremely helpful feedback, conversations, and comments on drafts of this work. 

\section{Preliminaries}
We begin by establishing our notion of exotic pairs of surfaces and internal stabilization, as well as the notation and conventions for the link homology tools that we use throughout this work.

\subsection{Topological preliminaries}

We are most interested in exotic phenomena in dimensions 2 and 4, specifically, phenomena involving exotically knotted pairs of surfaces in 4-manifolds. We recall that a pair of smooth 4-manifolds is called an \emph{exotic pair} if they are homeomorphic but not diffeomorphic. We recall also that there exists a natural number $k$ such that $X\#^{k}(S^{2}\times{S^{2}})$ and $Y\#^{k}(S^{2}\times{S^{2}})$ are diffeomorphic (see \cite{CTCWall}). We refer to the operation corresponding to taking a connect sum with a copy of $S^{2}\times{S^{2}}$ as an \emph{external stabilization} of the original 4-manifold. We turn now to surfaces in 4-manifolds. All manifolds (resp. submanifolds) are assumed to be smooth (resp. smoothly embedded) throughout.

\begin{definition}
    Let $\Sigma_{1}$ and $\Sigma_{2}$ be homologous surfaces with equal genus in some $4$-manifold $X$. Then $(\Sigma_{1},\Sigma_{2})$ form an \emph{exotic pair} of surfaces in $X$ if they are topologically isotopic (rel boundary if the surfaces have boundary), but not smoothly isotopic (rel boundary) in $X$. We call such surfaces \emph{exotically knotted} when they form an exotic pair.
\end{definition}

For any pair of exotically knotted surfaces, there exist topological operations that dissolve their exotic relationship.

\begin{definition}
    Let $\Sigma$ be a smoothly embedded surface in a 4-manifold $X$. Attaching a 1-handle to $\Sigma$ in $X$ while preserving orientability is called a \emph{weak internal stabilization} of $\Sigma$. If the surgery corresponds to taking a connect sum of pairs $(X,\Sigma)\#(S^{4},T^{2})$ where $T^{2}$ is an unknotted torus, then the operation is called a \emph{strong internal stabilization} of $\Sigma$. Throughout, we work entirely with strong internal stabilizations, simply referring to them as \emph{internal stabilizations}.
\end{definition}

For certain pairs of exotically knotted surfaces, the resulting surfaces after the application of sufficiently many internal stabilizations are smoothly isotopic rel boundary.

\begin{theorem}[\cite{Baykur2015}, Theorem 1]\label{thrm:baysun}
    Let $(\Sigma_{1},\Sigma_{2})$ be a pair of exotically knotted surfaces in $X$, then $\Sigma_{1}$ and $\Sigma_{2}$ become smoothly isotopic after some number $n\geq{0}$ of weak internal stabilizations. Furthermore, if the inclusion map $i:\partial(\nu \Sigma_{i})\hookrightarrow{X\setminus{\Sigma_{i}}}$ induces a surjection on $\pi_{1}$, then the result holds for strong internal stabilizations.
\end{theorem}

Note that the integer $n$ establishes a notion of \emph{stabilization distance} between exotically knotted pairs of surfaces. Theorem \ref{thrm:baysun} of Baykur and Sunukjian states that for exotically knotted pairs of surfaces $(\Sigma_{1},\Sigma_{2})$ satisfying the surjectivity on $\pi_{1}$ condition, there exists an integer $n>0$ such that the resulting surfaces $\Sigma_{1}\#^{n}T^{2}$ and $\Sigma_{2}\#^{n}T^{2}$ are smoothly isotopic. We remark also that exotically knotted surfaces also become smoothly isotopic after sufficiently many external stabilizations to their ambient 4-manifold. 

\begin{definition}
    Let $(\Sigma_{1},\Sigma_{2})$ be an exotically knotted pair of surfaces in $(B^{4},L)$ for some possibly empty link $L$. Define 
    \[d(\Sigma_{1},\Sigma_{2})=\min{\{k\;|\;\Sigma_{1}\#^{k}T^{2}\text{ and }\Sigma_{2}\#^{k}T^{2}\text{ are smoothly isotopic}\}}
    \]
    to be the \emph{internal stabilization distance} from $\Sigma_{1}$ to $\Sigma_{2}$.
\end{definition}

Baykur and Sunukjian show, for many families and constructions of exotically knotted surfaces, that $d(\Sigma_{1},\Sigma_{2})=1$. However, the \emph{internal stabilization conjecture} remains open and asks the following.

\begin{conjecture}\label{conj:intstab}
    If ($\Sigma_{1}$,$\Sigma_{2}$) is an exotically knotted pair of closed surfaces in $B^{4}$, then $d(\Sigma_{1},\Sigma_{2})=1$. 
\end{conjecture}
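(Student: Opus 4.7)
The final statement is Conjecture \ref{conj:intstab}, a well-known open problem, so I do not expect to actually resolve it. My plan would instead be to attempt an obstruction-theoretic attack using the Bar-Natan lasagna framework developed in this paper and to isolate where the obstruction ought to vanish in $B^{4}$.

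First, I would localize the conjecture via functoriality of Bar-Natan homology. Given an exotic pair $(\Sigma_{1},\Sigma_{2})$ of closed surfaces in $B^{4}$, regarded as sitting inside a small ball, they induce endomorphisms of $\skein_{0}^{BN}(B^{4})$. An internal stabilization of $\Sigma_{i}$ corresponds to post-composing its induced map with the map of a once-punctured torus cobordism, which over $\mathbb{F}_{2}[H]$ acts by multiplication by a specific central element of the underlying Frobenius algebra. I would then ask whether the difference $(\Sigma_{1})_{*} - (\Sigma_{2})_{*}$ is always annihilated by this torus action, using the Frobenius algebra structure underlying the Bar-Natan theory and comparing against Hayden's constructions of exotic pairs which distinguish on Bar-Natan homology.

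Next, assuming such algebraic obstructions do vanish after one stabilization, I would try to upgrade the algebraic conclusion to a positive isotopy along the lines of Baykur--Sunukjian (Theorem \ref{thrm:baysun}): the added handle supplies degrees of freedom for resolving isolated double points along a generic smooth homotopy between $\Sigma_{1}$ and $\Sigma_{2}$, and one would try to realize these resolutions concretely in a single step.

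The main obstacle is, of course, the fundamental gap between ``no link-homological obstruction'' and ``existence of a smooth isotopy''; gauge-theoretic obstructions are also not addressed by such a framework. Furthermore, the paper's Proposition \ref{prop:main} shows that for 4-manifolds admitting primitive fillings the Bar-Natan lasagna framework \emph{does} produce obstructions that survive one internal stabilization, so the conjecture posits qualitatively different behavior specifically for $B^{4}$, where no non-trivial primitive filling is available. Thus the skein lasagna framework cannot be used to disprove the conjecture in $B^{4}$, but it is unlikely to prove it either --- settling Conjecture \ref{conj:intstab} almost certainly requires ideas substantially outside the present framework.
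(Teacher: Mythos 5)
You correctly recognized that this is an open conjecture, not a theorem, and that the paper offers no proof of it --- it is quoted precisely to motivate the contrast with the relative (surfaces-with-boundary) setting, where the analogous statement is known to fail. There is nothing to compare against, since no proof exists in the paper or in the literature; your decision to outline an obstruction-theoretic attack and then explain why it should not succeed is the appropriate response.

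Your discussion is largely sound. One point worth sharpening: the conjecture concerns \emph{closed} surfaces in $B^{4}$, whereas every non-stabilizing example in the paper --- Hayden's pairs $(F_{g},F_{g}^{\prime})$ and their extensions via Proposition~\ref{prop:main} --- has nonempty boundary. The Bar-Natan lasagna obstruction exploits the boundary link $K_{g}$ crucially (the distinguishing element $\delta_{g}^{L}$ lives in $\skein_{0}^{BN}(B^{4};K_{g})\cong BN(K_{g})$, which vanishes for the empty link up to a trivial factor), so the framework has no purchase on closed surfaces in $B^{4}$ even before one worries about the gap between ``no homological obstruction'' and ``smoothly isotopic.'' You gesture at this by saying no primitive filling is available in $B^{4}$, but the closed-versus-relative distinction is the more structural reason the framework is silent here, and it is exactly the reason the conjecture can remain open while Theorem~\ref{thrm:Hayden-A} disproves its relative analogue.

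The second paragraph of your proposal --- upgrading a vanishing algebraic obstruction to an actual isotopy via the extra handle --- is the genuinely hard step and is not addressed by Baykur--Sunukjian, whose theorem gives existence of \emph{some} $n$ but no effective bound and certainly no converse from vanishing invariants. Be careful not to present that step as though it were within reach; it is the entire content of the conjecture.
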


Although this question is very much open for closed surfaces, it is generally not true for surfaces with boundary in the 4-ball. For example, in \cite{hayden2023atomic}, Hayden constructs infinitely many exotically knotted pairs of surfaces with boundary, with arbitrarily high genus, such that one internal stabilization is not enough. Also, Guth in \cite{guth2022exotic} produces yet another infinite family of exotically knotted pairs of relative surfaces in the 4-ball with arbitrarily large internal stabilization distances. We remark also on the existence of closed exotically knotted pairs in 4-manifolds with boundary other than the 4-ball produced in \cite{hayden2023stabilizationclosedknottedsurfaces}. The result of Hayden is the most relevant to this work, so we record it now.

\begin{theorem}[\cite{hayden2023atomic}, Theorem A]\label{thrm:Hayden-A}
    For any $g\geq{1}$, there exist exotically knotted pairs of surfaces with boundary of genus $g$ in $B^{4}$ that remain exotic after one internal stabilization.
\end{theorem}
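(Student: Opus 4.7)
The plan is to distinguish exotic pairs of surfaces by their induced cobordism maps on Bar-Natan Khovanov theory $\BN$ over $\BF_2[H]$, exploiting the identity $\mu \circ \Delta(1) = H$: attaching a genus-one handle to a cobordism multiplies the induced cobordism map by $H$. A single internal stabilization therefore shifts a cobordism map by a factor of $H$, and so fails to identify two maps whose difference has $H$-torsion order at least two.

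First I would construct, for each $g$, a surface pair. Starting from an exotic pair of slice disks $(D_g, D'_g)$ for a knot $K_g$ -- obtained by a Hayden--Sundberg-type construction tuned so that the cobordism-map difference $\BNbrack{D_g}(1) - \BNbrack{D'_g}(1) \in \BN(K_g)$ has $H$-torsion order at least $g+2$ -- I attach $g$ standard handles in a small ball inside each disk to produce genus-$g$ surfaces $F_g, F'_g \subset B^4$ with common boundary $K_g$. Topological isotopy of $F_g, F'_g$ rel boundary follows from that of $D_g, D'_g$ via Freedman's theorem together with the fact that the handles are attached in the same standard position; this topological isotopy persists after one internal stabilization performed in a small disjoint ball.

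Functoriality of $\BN$ under smooth cobordisms, combined with the handle identity $\mu \circ \Delta = H \cdot \idmap$, yields $\BNbrack{F_g}(1) = H^g \cdot \BNbrack{D_g}(1)$ and similarly for $F'_g$. A further internal stabilization scales each of these by another factor of $H$, so
\[
\BNbrack{F_g \# T^2}(1) - \BNbrack{F'_g \# T^2}(1) \;=\; H^{g+1} \cdot \bigl(\BNbrack{D_g}(1) - \BNbrack{D'_g}(1)\bigr),
\]
which is nonzero by the torsion-order hypothesis. Functoriality of $\BN$ under smooth cobordisms then obstructs smooth isotopy rel boundary.

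The main obstacle is producing, for each $g$, a disk pair $(D_g, D'_g)$ whose Bar-Natan cobordism-map difference has $H$-torsion order at least $g+2$. A plausible route is to iterate a tangle modification -- for instance a satellite operation or repeated Stallings twists -- on a base Hayden--Sundberg pair in a way that predictably boosts the torsion order, then to carry out an explicit chain-level computation using Bar-Natan's delooping and Gaussian-elimination reductions to certify this growth. Once such a family is in hand, the rest of the argument is immediate from functoriality and the handle evaluation $\mu \circ \Delta(1) = H$.
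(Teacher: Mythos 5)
The high-level mechanism you identify (Bar-Natan over $\mathbb{F}_2[H]$, internal stabilization acting as multiplication by $H$, distinguishing by $H$-torsion order of the cobordism-map difference) is indeed Hayden's mechanism, but the specific construction you propose does not match Hayden's and, more importantly, requires an ingredient that is not available.

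Your plan is to produce, for each $g$, a disk pair $(D_g, D'_g)$ whose Bar-Natan cobordism-map difference has $H$-torsion order at least $g+2$, then attach $g$ internal-stabilization handles inside a small ball to each disk. Because those handle attachments are themselves internal stabilizations, the difference of the genus-$g$ cobordism maps is $H^g$ times the original disk difference; surviving one further stabilization forces the $H$-torsion order $\geq g+2$ requirement. This is precisely the obstacle you flag as ``the main obstacle,'' and it is fatal: for Bar-Natan homology, no construction is known that produces disk pairs (or any surface pairs) with arbitrarily large $H$-torsion order. Indeed, the present paper explicitly remarks that ``the Bar-Natan homology package, unlike link Floer homology \ldots does not immediately produce exotic pairs of surfaces with arbitrarily large internal stabilization distance in $B^4$'' (this is precisely why Guth's work \cite{guth2022exotic} uses link Floer homology instead). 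You have reduced a theorem to a strictly harder open problem.

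Hayden's actual route avoids this entirely. He starts from a \emph{single} knot $K_H$ bounding disks $D, D'$ (which, incidentally, are not topologically isotopic; you call them ``exotic'' disks, which is not Hayden's setup) for which the difference element $\widetilde{\delta}_0 := \widetilde{BN}(-D)(1) - \widetilde{BN}(-D')(1)$ has $H$-torsion order $>1$ --- a single explicit computation, not a family. He then passes from the disks to genus-one surfaces $F_1, F'_1$ by \emph{band attachment} (which changes the boundary knot and restores topological isotopy), and from there to genus-$g$ surfaces by \emph{connect summing with the fiber surface of $T_{2,2g-1}$}. Crucially, neither of these operations is an internal stabilization, so neither costs a factor of $H$: the distinguishing element retains its torsion order $\geq 2$ uniformly in $g$, and a single stabilization ($\times H$) does not kill it. Your approach identifies the right algebraic signal but insists on producing genus by the one operation ($\#T^2$ in a small ball) that does multiply by $H$, rather than by operations (band attachment, connect sum with a Seifert surface) that preserve the obstruction; that is the gap.
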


This theorem is proven by constructing pairs of surfaces that induce different maps on Bar-Natan homology. In particular, Hayden produces a knot $K_{H}$ (see Figure \ref{fig:0}), that bounds a pair of disks that are neither topologically nor smoothly isotopic, these disks yield a pair of genus-1 exotically knotted surfaces after attaching bands. Let these genus 1 surfaces be denoted $F_{1}$ and $F_{1}^{\prime}$ respectively, and let $K_{1}=\partial{F_{1}}=\partial{F^{\prime}_{1}}$ denote their boundary. We may then connect sum $F_{1}$ and $F_{1}^{\prime}$ with the fiber surface of $T_{2,2g-1}$ and we let the resulting exotically knotted pair be denoted $(F_{g},F^{\prime}_{g})$ with boundary denoted $K_{g}$ (see Figure \ref{fig:0}). We recall briefly the method used to prove Theorem \ref{thrm:Hayden-A} using Bar-Natan homology.

\begin{figure}
    \includegraphics[scale=0.75]{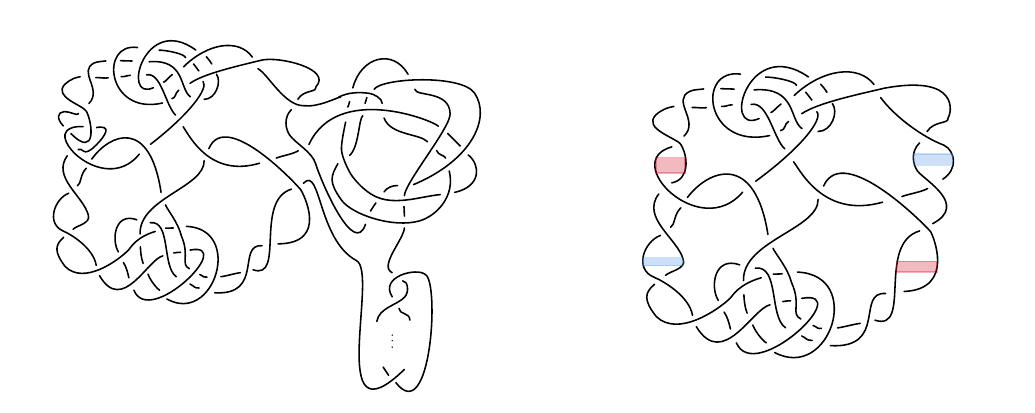}
    \caption{\textbf{Left:} A diagram of the knot $K_{g}=\partial{F_{g}}=\partial{F_{g}^{\prime}}$. \textbf{Right:} The knot $K_{H}$ with disks $D$ and $D^{\prime}$. Both diagrams are found in \cite{hayden2023atomic}.}\label{fig:0}
\end{figure}

\begin{remark}[\cite{hayden2023atomic}, Propositions 4.1 and 4.2]\label{prop:hay4.1,4.2}
    Let $-D$ and $-D^{\prime}$ denote the time-reversed (punctured) disks bounded by the knot $K_{H}$. These disks, taken as cobordisms $-D,-D^{\prime}:U\rightarrow{K_{H}}$, induce distinct maps on reduced Bar-Natan homology with $\mathbb{F}_{2}[H]$ coefficients. Furthermore, these maps remain distinct after multiplication by $H$, implying that the disks $D$ and $D^{\prime}$ remain exotic after a single internal stabilization.
\end{remark}

\subsection{Bar-Natan/equivariant link homology}

To establish preliminaries and conventions, as well as state Theorem \ref{thrm:Hayden-A} and Remark \ref{prop:hay4.1,4.2} in a more explicit way, we briefly review Bar-Natan's link homology theory, defined originally in \cite{BN-tangles}.

\begin{definition}\label{def:frobpair}
    The \emph{$U(2)$-equivariant Frobenius pair} is the pair $(R,A)$ where $R$ is the ground ring $\mathbb{F}_{2}[E_{1},E_{2}]$, and $A$ is the Frobenius algebra
    \[
        A:=\KhR_{U(2)}(unknot)=\frac{R[X]}{(X^{2}-E_{1}X+E_{2})}
    \]
    where $E_{1}$ and $E_{2}$ are classes of degree $2$ and $4$ respectively. We remark that the classes $E_{1}$ and $E_{2}$ can be identified with the first and second elementary symmetric polynomials in two variables $\{r_{1},r_{2}\}$, each of degree 2.
\end{definition}

By letting $H$ denote $E_{1}(r_{1},r_{2})$ with $r_{2}=0$, which implies that $E_{2}=0$, we obtain the Frobenius pair for Bar-Natan homology

\[
    R^{BN}=\mathbb{F}_{2}[H],\;\;A^{BN}=\frac{R^{BN}[X]}{(X^{2}-HX)}.
\]

Note that in these conventions, multiplication by $H$ is a bi-degree $(0,2)$ map. The link homology theory associated to the TQFT corresponding to the pair $(R^{BN},A^{BN})$ will be denoted $BN(\underline{\;\;})$ throughout; this is the theory typically referred to as \emph{Bar-Natan homology}. The local relations with dots and some consequences for Bar-Natan homology are given in Figure \ref{fig:localrels}.

\vspace{-5mm}

\begin{center}
    \begin{figure}[h]
        \includegraphics[scale=0.65]{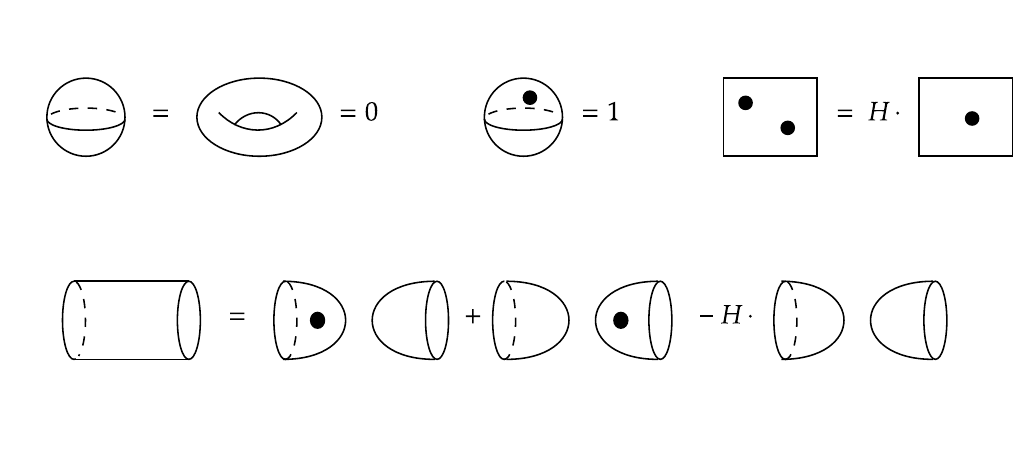}
        \vspace{-10mm}
        \caption{\textbf{Top:} The sphere, torus, dotted sphere, and $H$-trading relation over $\mathbb{F}_{2}$ coefficients. \textbf{Bottom:} the neck-cutting relation.}\label{fig:localrels}
    \end{figure}
\end{center}

Using the neck-cutting relation in Figure \ref{fig:localrels}, we can re-interpret the internal stabilization operation as in Figure \ref{fig:2}. We note that connect summing with $T^{2}$ is equivalent to the creation of a dotted torus, disjoint from the original surface.

    \begin{center}
        \begin{figure}
            \includegraphics[scale=0.7]{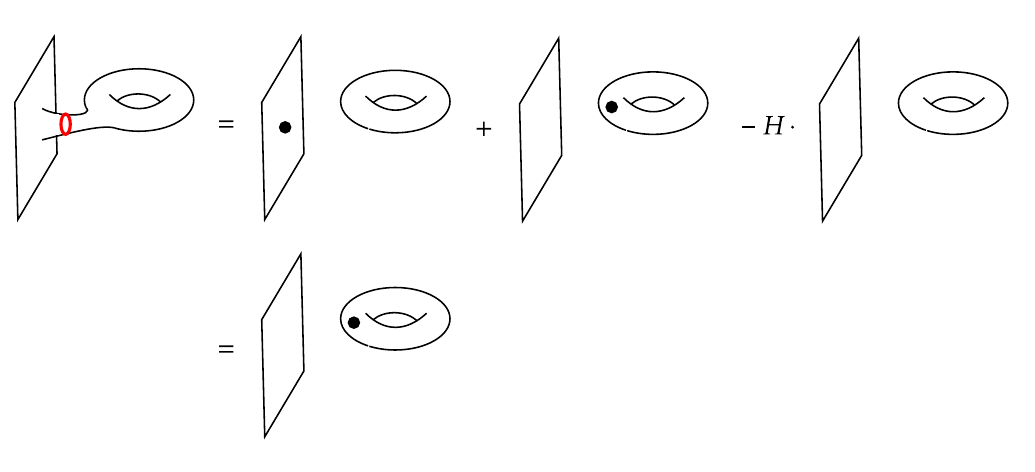}

        \vspace{-7mm}
        \caption{The effect of neck-cutting along the small red circle.}\label{fig:2}
        \end{figure}
    \end{center}

It will be necessary to discuss the reduced form of this theory, which we denote by $\widetilde{BN}(\underline{\;\;})$ in general, and $\widetilde{BN}_{y}(\underline{\;\;})$, $y\in{\{1,x\}}$ for a specific version of reduced Bar-Natan homology. For references, see \cite{immersedcurves,wigdersonBNsplits,AZ2022}.

\begin{definition}\label{def:relBN}
    Let $(L,p)$ be a link with a basepoint $p$. Let $f_{x}$ denote the chain map induced by the cobordism corresponding to the creation and merging of an $x$-labeled circle onto the component of $L$ containing $p$. We define the \emph{reduced Bar-Natan complex} $\widetilde{CBN_{x}}(L,p)$ as the image $\text{im}(f_{x})\subset{CBN(L)}$. We define $\widetilde{CBN_{1}}(L,p)$ to be the quotient complex $CBN(L)/\widetilde{CBN}_{x}(L)$. The homology of these complexes are isomorphic up to a shift in $q$-grading, and we denote the isomorphism type by $\widetilde{BN}(L,p)$.
\end{definition}

\begin{remark}
    Note that, due to the basepoint requirement, the reduced Bar-Natan homology theory is functorial for non-empty links and requires the introduction of punctures for cobordisms of the form $L\rightarrow{\emptyset}$ or $\emptyset\rightarrow{L}$. 
\end{remark}

We remark that a 1-handle surgery corresponding to an internal stabilization on a link cobordism $\Sigma:L_{0}\rightarrow{L_{1}}$ has the effect of multiplication by $2X-H$, or simply $H$ when working with $\mathbb{F}_{2}$ coefficients. In the following remark we record the more precise statement of Hayden. As per Remark \ref{prop:hay4.1,4.2}, we have that $\widetilde{BN}(-D)\neq{\widetilde{BN}(-D^{\prime})}$. Furthermore, we have that $H\cdot \widetilde{BN}(-D)\neq H\cdot \widetilde{BN}(-D^{\prime})$, implying that $-D$ and $-D^{\prime}$ do not become smoothly isotopic after a single internal stabilization.

\begin{remark}\label{rmk:deltaelement}
    Hayden proves this result by studying the element $\widetilde{\delta}_{0}:=\widetilde{BN}(-D)(1)-\widetilde{BN}(-D^{\prime})(1)\in{\widetilde{BN}(K_{H})}$. They show that $\widetilde{\delta_{0}}$ is non-zero and has $H$-torsion order greater than $1$. We now remark that a corresponding element in the unreduced theory also satisfies this property. This is required as we do not work in the reduced setting later on. To see this, we use a theorem of Wigderson \cite{wigdersonBNsplits}.
\end{remark}

\begin{theorem}[\cite{wigdersonBNsplits}, Theorem 3] \label{thrm:wigsplit}
    There is a natural $\mathbb{F}_{2}[H]$-module isomorphism
    \[
        \Phi_{K}:H_{*}(CBN(K))\xrightarrow{\cong}{H_{*}(\widetilde{CBN}_{x}(K))\oplus{H_{*}(\widetilde{CBN}_{1}(K))}}.
    \]
    Furthermore, there is an isomorphism $f_{K}:H_{*}(\widetilde{CBN}_{x}(K))\xrightarrow{\cong}{H_{*}(q^{-2}h^{0}\widetilde{CBN}_{1}(K))}$.
\end{theorem}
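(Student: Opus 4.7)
The theorem makes two claims: a module splitting $\Phi_K$ of the unreduced Bar-Natan homology into the two reduced pieces, and a degree-shifted isomorphism $f_K$ between those two pieces. My plan is to work resolution-by-resolution and exploit the basepoint action. Fix $p\in K$, and in each cube resolution let $C_0$ denote the circle through $p$; then the chain group splits as an $\mathbb{F}_{2}[H]$-module, $CBN(K) = C_1 \oplus C_x$, according to whether $C_0$ carries the label $1$ or $X$. A direct check with Bar-Natan's merge and comultiplication formulas shows that $C_x$ is preserved by the differential, so $\widetilde{CBN}_x(K) = C_x$ is a subcomplex and $\widetilde{CBN}_1(K) \cong C_1$ is the quotient complex; the differential takes the triangular form
\[
d \;=\; \begin{pmatrix} d_1 & 0 \\ \beta & d_x \end{pmatrix},
\]
and the theorem becomes a statement about the off-diagonal piece $\beta : C_1 \to C_x$.

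The key observation is that the chain endomorphism $f_x$ of $CBN(K)$ assumes, in the same block decomposition, the form
\[
f_x \;=\; \begin{pmatrix} 0 & 0 \\ \sigma & H\cdot\idmap \end{pmatrix},
\]
where $\sigma : C_1 \to C_x$ of bidegree $(0,-2)$ is the tautological relabeling $1_{C_0}\otimes\eta \mapsto X_{C_0}\otimes\eta$. Imposing the chain-map condition $f_x d = d f_x$ produces the identity
\[
H\beta \;=\; d_x\sigma + \sigma d_1,
\]
so that $H\beta$ is null-homotopic via the chain-group map $\sigma$. Inverting $H$ already yields a chain-level splitting of $CBN(K)[H^{-1}]$ together with a chain-homotopy equivalence $\widetilde{CBN}_1(K)[H^{-1}] \simeq q^{-2}\widetilde{CBN}_x(K)[H^{-1}]$, which proves the $H$-localized version of both conclusions of the theorem.

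To descend to $\mathbb{F}_{2}[H]$ itself, I would use the $\mathbb{F}_{2}[H]$-module structure theorem: over this PID each of the three homology groups splits canonically as free part $\oplus$ $H$-torsion part, and the free parts match via the localized calculation above, so the work is in matching the $H$-torsion parts. For this the relabeling $\sigma$ is crucial: although it is not itself a chain map on the nose, it is a chain map modulo $H$, and the identity $H\beta = d_x\sigma + \sigma d_1$ lets one inductively transport information up the $H$-adic tower, showing that the torsion on the two sides matches exactly via the degree shift. Assembled together this yields both $\Phi_K$ and $f_K$.

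The main obstacle is exactly this descent from $\mathbb{F}_{2}[H^{\pm 1}]$ to $\mathbb{F}_{2}[H]$: the chain-level identity does not split the short exact sequence $0 \to \widetilde{CBN}_x(K) \to CBN(K) \to \widetilde{CBN}_1(K) \to 0$ over $\mathbb{F}_{2}[H]$, and the connecting homomorphism on homology satisfies $H\partial = 0$ but need not vanish. The content of the theorem is that the resulting extension of $\mathbb{F}_{2}[H]$-modules is nevertheless trivial, and the essential input beyond the localized splitting is the bidegree-shifting bijection $\sigma$ of underlying chain groups, which rigidly couples the $H$-torsion on the two sides of the short exact sequence.
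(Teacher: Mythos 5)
This theorem is quoted in the paper from \cite{wigdersonBNsplits} without proof, so there is no in-paper argument to compare against; I will evaluate the proposal on its own.

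Your structural setup is correct and matches how one would naturally begin. The chain group does split as $C_1 \oplus C_x$ by the label on the circle through $p$; the differential is block lower-triangular because Bar-Natan's multiplication and comultiplication both preserve the label $X$ on that circle; $f_x$ is the dot map and has the stated block form $\begin{pmatrix} 0 & 0 \\ \sigma & H\cdot\idmap \end{pmatrix}$; and the chain-map condition indeed gives $H\beta = d_x\sigma + \sigma d_1$ over $\mathbb{F}_2$. From this one correctly reads off that the connecting homomorphism $\partial$ of the short exact sequence $0 \to \widetilde{CBN}_x \to CBN \to \widetilde{CBN}_1 \to 0$ is killed by $H$, and that both conclusions of the theorem hold after inverting $H$.

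The gap is precisely the step you flag at the end: the descent from $\mathbb{F}_2[H^{\pm1}]$ to $\mathbb{F}_2[H]$, and that descent \emph{is} the content of the theorem, so gesturing at it does not suffice. Knowing $H\beta$ is null-homotopic via $\sigma$ shows only that $\partial$ lands in the $H$-annihilated part of $H_*(\widetilde{CBN}_x)$, not that $\partial=0$; and even if $\partial=0$, a short exact sequence of finitely generated $\mathbb{F}_2[H]$-modules need not split (e.g.\ $0 \to \mathbb{F}_2[H]/H \to \mathbb{F}_2[H]/H^2 \to \mathbb{F}_2[H]/H \to 0$ is nonsplit). "Inductively transporting information up the $H$-adic tower'' is not yet an argument: $\sigma$ is a chain map modulo $H$ but the identity $d_x\sigma + \sigma d_1 = H\beta$ does \emph{not} make it a chain map modulo $H^2$, so you would need to say what the induction step actually produces. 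To close the gap you would need a concrete mechanism — for instance, a filtered chain-level argument using $\sigma$ to match the $H$-torsion towers degree-by-degree and show the extension class vanishes, or an explicit splitting map on homology — and that mechanism is what is missing. In addition, the second conclusion ($f_K$) requires producing an actual isomorphism on homology, not just the chain-group bijection $\sigma$, which is not a chain map; the proposal does not construct such an $f_K$ over $\mathbb{F}_2[H]$.
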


Equipped with this splitting isomorphism, we now have the following lemma.

\begin{lemma}\label{lem:unreducedworks}
    Let $\widetilde{\delta}_{0}\in{\widetilde{BN}_{x}}(K_{H})$ be the $H$-torsion element defined in Remark \ref{rmk:deltaelement} and let $\delta_{0}$ denote the element $\Phi_{K_{H}}^{-1}((\widetilde{\delta}_{0},0))\in BN(K_{H})$. The element $\delta_{0}$ is non-trivial with the same $H$-torsion order as $\widetilde{\delta}_{0}$.
\end{lemma}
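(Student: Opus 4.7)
The plan is to leverage Wigderson's splitting (Theorem \ref{thrm:wigsplit}) directly, exploiting that $\Phi_{K_H}$ is an isomorphism of $\mathbb{F}_2[H]$-modules, not merely of abelian groups. Since $\Phi_{K_H}$ respects the $H$-action, every $\mathbb{F}_2[H]$-module-theoretic invariant, including non-triviality and $H$-torsion order, transfers between $BN(K_H)$ and the direct sum $\widetilde{BN}_x(K_H) \oplus \widetilde{BN}_1(K_H)$. Thus it suffices to analyze the element $(\widetilde{\delta}_0, 0)$ in the direct sum and then pull back along $\Phi_{K_H}^{-1}$.

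First I would recall what $H$-torsion order means: for a module element $v$, it is the minimal $n \geq 0$ such that $H^n \cdot v = 0$ (and $\infty$ if no such $n$ exists). By Remark \ref{rmk:deltaelement}, $\widetilde{\delta}_0$ has $H$-torsion order strictly greater than $1$, i.e.\ $H \cdot \widetilde{\delta}_0 \neq 0$ (and in particular $\widetilde{\delta}_0 \neq 0$). Let $n$ denote this torsion order.

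Next I would observe that for any direct sum $M \oplus N$ of $\mathbb{F}_2[H]$-modules, the torsion order of a pair $(a,0)$ equals the torsion order of $a$ in $M$: indeed, $H^k \cdot (a,0) = (H^k a, 0)$, which vanishes iff $H^k a = 0$. Applying this with $M = \widetilde{BN}_x(K_H)$, $N = \widetilde{BN}_1(K_H)$, and $a = \widetilde{\delta}_0$, the element $(\widetilde{\delta}_0,0)$ is nonzero and has $H$-torsion order exactly $n$.

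Finally, because $\Phi_{K_H}$ is an $\mathbb{F}_2[H]$-module isomorphism, $\Phi_{K_H}^{-1}$ is as well, so it carries $(\widetilde{\delta}_0, 0)$ to a nonzero element $\delta_0 \in BN(K_H)$ satisfying $H^k \cdot \delta_0 = \Phi_{K_H}^{-1}(H^k \cdot (\widetilde{\delta}_0, 0))$ for all $k$. Consequently $\delta_0$ has $H$-torsion order exactly $n$, matching that of $\widetilde{\delta}_0$. There is no real obstacle here beyond bookkeeping; the only subtlety worth flagging is ensuring that the reduced-to-unreduced comparison is performed using the $\mathbb{F}_2[H]$-linearity of $\Phi_{K_H}$ rather than merely its additivity, since the conclusion about torsion depends on the $H$-module structure being preserved.
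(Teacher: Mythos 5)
Your proof is correct and matches the paper's approach: the paper simply states that ``the result follows immediately from the definitions,'' and your argument is precisely the natural unpacking of that remark, using that $\Phi_{K_H}$ is an $\mathbb{F}_2[H]$-module isomorphism so that non-triviality and $H$-torsion order transfer across it and across the direct-sum decomposition.
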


\begin{proof}
    The result follows immediately from the definitions.
\end{proof}

\begin{definition}\label{def:localizedBN} (Inverting the discriminant) We now recall the definition of the \emph{localized} version of Bar-Natan homology. Let $D$ denote the discriminant of the polynomial $X^{2}-E_{1}X+E_{2}$, then define a Frobenius pair by $R_{D}:=R[D^{-1}]$ and $A_{D}=A\otimes_{R}{R_{D}}$. The TQFT associated to this Frobenius pair defines the localized $U(2)$-equivariant homology. In our case, after setting $E_{1}=H$ and $E_{2}=0$, we have that the discriminant is $H^{2}$ and we obtain an equivalent theory if we let $D=H$. Finally, after inverting the discriminant as above, we let $BN_{H=1}$ denote the link homology theory obtained from setting $H=1$ in the localized Bar-Natan theory. We remark that the link homology theory associated to this pair $(R^{BN}_{H^{-1}},A^{BN}_{H^{-1}})$ with $H=1$ is equivalent to the Lee homology theory \cite{lee-endo,khovanov2020linkhomologyfrobeniusextensions}. 
\end{definition}

\begin{remark}
We now claim that the Proposition of Hayden in Remark \ref{prop:hay4.1,4.2} can be interpreted and subsequently extended by passing to the \emph{homological skein module} (\emph{lasagna module}) setting. Before we perform this extension, we provide a brief overview of the invariants at play and their relevant features.
\end{remark}

\section{A skein lasagna module for mod 2 Bar-Natan homology}

The skein lasagna module recipe given by \cite{MWW-lasagna} allows the user to import any sufficiently well behaved functorial link homology theory to produce an invariant of smooth 4-manifolds. 

\begin{definition}
    Let $Z$ denote such a well-behaved TQFT for links in $\mathbb{S}^{3}$, and let $(X,L)$ denote a 4-manifold and boundary link pair. A \emph{skein surface} of $(X,L)$ consists of the topological data $(S,\{B_{i}\}_{i\in{I}},\{L_{i}\}_{i\in{I}})$, where $S$ is a properly embedded, oriented, framed surface such that $\partial{S}\cap{\partial{X}}=L$, where $\{B_{i}\}_{i\in{I}}$ is a finite collection of disjoint embedded $4$-balls with their interiors deleted from $\text{int}(X)$, such that $\partial{S}$ intersects the boundary of each $B_{i}$ in the link $L_{i}$. Skein surfaces are upgraded to \emph{$Z$-lasagna fillings} when the input links $\{L_{i}\}_{i\in{I}}$ are labeled by elements of $Z(L_{i})$ for each $L_{i}$. For a skein surface $S$ with label $v$ we use the notation $(S,v)$ to denote a corresponding $Z$-lasagna filling. The \emph{skein lasagna module} is
    \[
        \skein_{0}^{Z}(X;L):=\mathbb{F}\langle{\text{$Z$-lasagna fillings of $(X,L)$}}\rangle\big/\sim
    \]
where $\sim$ is the \emph{skein equivalence relation} defined in \cite{MWW-lasagna}. When say two lasagna fillings $(S,v)$ and $(S^{\prime},v^{\prime})$ are \emph{skein equivalent} if $(S,v)\sim{(S^{\prime},v^{\prime}})$.
\end{definition}

For a review of skein lasagna modules, we refer the reader to some of the many resources that exist in the literature: \cite{MWW-lasagna,MN22,RenWillis,SZ2024}. Throughout this note it will be necessary to distinguish between skein surfaces, $Z$-lasagna fillings, and skein equivalence classes of $Z$-lasagna fillings. For a lasagna filling $(S,v)$, we use the notation $[(S,v)]$ to denote the corresponding element in $\skein_{0}^{Z}(X;L)$; if the label is not relevant, we will drop it from the notation and write $[S]$ instead. When context is clear, we refer to lasagna elements $[S]$ simply as fillings.

Armed with this recipe, one may define a smooth 4-manifold invariant $\skein_{0}^{BN}$ by importing the Bar-Natan TQFT given by $(R^{BN},A^{BN})$. This invariant is not brand new, as the authors in \cite{mwwsurfaces} construct such an invariant for the more general $GL(2)$-equivariant version of $\mathfrak{gl}_{2}$ link homology. The invariant we are interested in is obtained from the construction in \cite{mwwsurfaces} by setting $N=2$, the ground ring $R=\mathbb{F}_{2}$, and the parameters $E_{1}$ and $E_{2}$ to $H$ and 0 respectively.

In \cite{mwwsurfaces}, the authors verify that the $GL(2)$-equivariant version satisfies the requirements for the construction of a skein lasagna module delineated in \cite{MWW-lasagna}, so the invariant $\skein_{0}^{BN}$ exists and is well-defined for Bar-Natan homology. Furthermore, since $\skein_{0}^{BN}$ is a specialization of the $GL(2)$-equivariant version, it enjoys some analogous features.

\begin{lemma}\label{lem:4balllasagnaisZ}
    For a link homology theory $Z$, the skein lasagna module construction for a pair $(B^{4},L)$ recovers $Z$. In particular,
    \[
    \skein_{0}^{Z}(B^{4};L)=Z(L).
    \]
\end{lemma}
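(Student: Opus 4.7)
The plan is to produce mutually inverse maps between $\skein_{0}^{Z}(B^{4};L)$ and $Z(L)$, following the standard argument from \cite{MWW-lasagna}. The forward map, call it $\Psi\colon \skein_{0}^{Z}(B^{4};L)\to Z(L)$, is defined by evaluation: given a $Z$-lasagna filling $(S,\{B_{i}\},\{L_{i}\},\{v_{i}\})$ of $(B^{4},L)$, the surface $S$ sits inside the cobordism $B^{4}\setminus \bigsqcup_{i} \mathrm{int}(B_{i})$ from $\bigsqcup_{i} L_{i}$ to $L$, so functoriality of $Z$ yields a map $Z(S)\colon \bigotimes_{i} Z(L_{i}) \to Z(L)$, and we set $\Psi([(S,v)]):=Z(S)(\bigotimes_{i} v_{i})$. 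In the reverse direction, the map $\Phi\colon Z(L)\to \skein_{0}^{Z}(B^{4};L)$ sends an element $v\in Z(L)$ to the skein class of the filling consisting of a single input ball $B_{0}\subset \mathrm{int}(B^{4})$ whose boundary link is a copy of $L$, connected to $L\subset \partial B^{4}$ by a product cylinder $L\times [0,1]$, labeled by $v$.

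First I would verify that $\Psi$ is well defined on skein equivalence classes. The skein equivalence relation is generated by isotopy of the data and by the local relation which, inside an auxiliary 4-ball $B\subset \mathrm{int}(B^{4})$, allows one to replace a piece of the filling by any other piece representing the same element of $Z$ on the boundary link $\partial B \cap S$. Both moves are preserved by the evaluation $Z(S)(\bigotimes v_{i})$: isotopy invariance is immediate from functoriality of $Z$ up to boundary-fixing diffeomorphism, and the local skein relation is respected because functoriality lets us factor $Z(S)$ through $Z(\partial B \cap S)$, where by hypothesis the two local fillings act identically. Well-definedness of $\Phi$ is automatic since it is defined directly on generators.

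Next I would check $\Psi\circ \Phi=\mathrm{id}_{Z(L)}$: this is immediate because the cobordism associated to the product filling $L\times[0,1]$ is the identity cobordism, so $Z$ sends it to $\mathrm{id}_{Z(L)}$. The real content is $\Phi\circ \Psi=\mathrm{id}$, i.e.\ every lasagna filling of $(B^{4},L)$ is skein equivalent to a single-ball filling of the above product form. Given $(S,\{B_{i}\},\{v_{i}\})$, one chooses a single large 4-ball $B_{0}\subset \mathrm{int}(B^{4})$ containing all of $S\cup \bigsqcup B_{i}$ except for a collar of $L$, and invokes the skein relation inside $B_{0}$: the portion of the filling inside $B_{0}$ is, by definition, a $Z$-lasagna filling of the interior pair, so it is skein equivalent to the trivial filling on $\partial B_{0}\cap S$ labeled by $Z(S|_{B_{0}})(\bigotimes v_{i})=\Psi([(S,v)])$. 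What remains outside is precisely the product cylinder from $\partial B_{0}\cap S$ to $L$.

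The main obstacle is the last step: one must know that the skein relation is strong enough to absorb the entire interior of a lasagna filling into a single ball with label equal to $Z$ of that interior. This is the defining property of the skein equivalence relation as formulated in \cite{MWW-lasagna}, and the verification for the pair $(R^{BN},A^{BN})$ reduces to the general case via the $GL(2)$-equivariant construction of \cite{mwwsurfaces}; accordingly, the lemma can be invoked for $Z=BN$ without additional work once the foregoing argument is assembled.
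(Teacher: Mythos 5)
The paper does not actually prove this lemma; it records it as a known consequence of the framework in \cite{MWW-lasagna} and moves on, and your argument is a correct reconstruction of the standard MWW proof. The mutually inverse maps you build — evaluation $\Psi$ by functoriality of $Z$, and the single-input-ball product-cylinder filling $\Phi$ — together with the absorption of an arbitrary filling into one large ball via the defining skein relation constitute exactly the argument the paper implicitly relies on, so your proof is sound and takes essentially the intended approach.
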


We will make use of this property in Section \ref{sec:4} for $Z=BN(\underline{\;\;})$.

\begin{definition}\label{def:homdiv}
    A surface $S\subset{(X,L)}$ is called \emph{homologically diverse} if no non-trivial union of its closed components is null-homologous in $X$.
\end{definition}

\begin{remark} Simple examples of such surfaces include surfaces with boundary with no closed components, or closed homologically essential surfaces with a single closed component. The authors of \cite{mwwsurfaces} prove that homologically diverse surfaces correspond to non-trivial free generators of the $GL(2)$-equivariant skein lasagna module. In particular, the authors show that a homologically diverse surface $S\subset{(X,L)}$ defines a class in tri-degree $([S],\deg_{t}(S),\deg_{q}(S))$, where $[S]\in{H_{2}^{L}(X)}$, that is non-trivial modulo torsion in the $GL(2)$-equivariant skein lasagna module.
\end{remark}

In Corollary \ref{cor:divsurfacesforF2}, we verify that this result holds for our invariant over $\mathbb{F}_{2}$ coefficients. We will return to these surfaces and their significance after establishing some properties of the invariant $\skein_{0}^{BN}$.

We now define the $H$-action on Bar-Natan lasagna modules. The invariant $\skein_{0}^{BN}$ inherits an $\mathbb{F}_{2}[H]$-module structure from Bar-Natan homology.
We begin studying the this $\mathbb{F}_{2}[H]$-module structure by noting that the cabling directed system perspective of Manolescu-Neithalath \cite{MN22} applies to $\skein_{0}^{BN}$.

\begin{lemma}
    Let $(X,L)$ be a 4-manifold and boundary link pair, where $X$ is a 2-handlebody with Kirby diagram given by the framed oriented link $K$. Then there is an isomorphism
    \[
        \colim_{r\in{\mathbb{Z}_{\geq{0}}}}BN(K(r+\alpha^{+},r-\alpha^{-})\cup{L})^{B_{r+\alpha^{+},r-\alpha^{-}}}\{2r-|\alpha|\}\xrightarrow{\cong}\skein_{0}^{BN}(X;L,\alpha)
    \] 
    where $BN(K(r+\alpha^{+},r-\alpha^{-})\cup{L})^{B_{r+\alpha^{+},r-\alpha^{-}}}\{2r-|\alpha|\}$ denotes the Bar-Natan homology group of the cable $K(r+\alpha^{+},r-\alpha^{-})$, symmetrized with respect to the braid group action permuting the components of $K(r+\alpha^{+},r-\alpha^{-})$. The colimit is taken along annuli with (possibly zero) dots (or, equivalently, $H^{k}$ times a dotted annulus for $k\in{\mathbb{Z}_{\geq{0}}}$ as per the $H$-trading relation (Figure \ref{fig:localrels})).
\end{lemma}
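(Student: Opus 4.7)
The plan is to follow the cabling directed system argument of Manolescu–Neithalath \cite{MN22} essentially verbatim, specialized from Khovanov homology to the Bar-Natan TQFT $(R^{BN},A^{BN})$. That argument only uses formal properties of the link homology theory that $\skein_0^{BN}$ already inherits from its $GL(2)$-equivariant parent in \cite{mwwsurfaces}: functoriality under link cobordisms, compatibility with disjoint union, the dotted-annulus / neck-cutting relation, and the identification $\skein_0^{BN}(B^4;L)=BN(L)$ of Lemma \ref{lem:4balllasagnaisZ}. So the first step is simply to record that each of these ingredients holds in our setting, citing \cite{mwwsurfaces} and Lemma \ref{lem:4balllasagnaisZ}.

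Next I would construct the map from the colimit to $\skein_0^{BN}(X;L,\alpha)$. Present $X$ as $B^4$ with $2$-handles attached along $K$, and fix standard meridional cocore disks for those handles. A cable $K(r+\alpha^+,r-\alpha^-)\cup L$ together with a Bar-Natan class $v$ determines a lasagna filling of $(X,L,\alpha)$ by filling the cocores with parallel meridional disks and placing a single inner ball containing $v$; the grading shift $\{2r-|\alpha|\}$ records the $q$-degree contribution of these meridional disks under the Bar-Natan grading conventions. The directed system morphisms are given by capping off a strand with a (possibly dotted) annulus, which by the $H$-trading relation in Figure \ref{fig:localrels} accounts for multiplication by powers of $H$; one checks these maps are compatible with the filling assignment using the neck-cutting relation, yielding a well-defined map out of the colimit. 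The symmetrization by the braid group $B_{r+\alpha^+,r-\alpha^-}$ is forced because relabeling parallel strands of the cable produces skein-equivalent fillings.

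For surjectivity, take any lasagna filling $(S,\{B_i\},\{L_i\})$. Applying the same isotopy and neck-cutting reductions as in \cite{MN22}, push $S$ into a neighborhood of $B^4\cup(\text{cocores})$ and combine all inner balls into a single ball, so that $S$ restricted to the complement of that ball is a parallel cable of $K$ (plus $L$) intersecting each cocore some fixed number of times. This produces a preimage in $BN(K(r+\alpha^+,r-\alpha^-)\cup L)$ for sufficiently large $r$. Injectivity is shown by verifying that the defining skein relations of $\skein_0^{BN}$—linearity inside each ball, isotopy of the surface, and the skein relation across balls—are precisely generated by the directed system morphisms modulo the braid action; this is again the same bookkeeping as in the Khovanov case.

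The main obstacle is checking that nothing in the Manolescu–Neithalath argument uses a special feature of the Khovanov Frobenius algebra that fails for $(R^{BN},A^{BN})$. The point is that their argument only needs a TQFT satisfying neck-cutting with a dotted-annulus correction—exactly the content of Figure \ref{fig:localrels}, where the Bar-Natan neck-cutting relation has the same form as Khovanov's up to the extra $H$-term, which is absorbed by enlarging the directed system to include dotted annuli (equivalently $H^k$ times undotted annuli). Once this is verified, the proof of the isomorphism proceeds formally as in \cite{MN22}.
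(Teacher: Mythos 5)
Your proposal takes the same route as the paper, which proves this lemma in one line by observing that the cabling-directed-system argument of Manolescu--Neithalath \cite{MN22} (and \cite{MWWhandles}) goes through verbatim with Bar-Natan homology in place of $\KhR_2$; your write-up simply unpacks what that transfer entails (the map out of the colimit via meridional disks, surjectivity by pushing skein surfaces into a neighborhood of $B^4\cup(\text{cocores})$, injectivity via skein relations, and the check that nothing breaks for the Frobenius pair $(R^{BN},A^{BN})$ beyond absorbing the extra $H$-term in neck-cutting into the dotted-annulus maps). This matches the paper's intent, and your emphasis on the $H$-trading relation enlarging the directed system is exactly the one nontrivial point the lemma statement flags.
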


\begin{proof}
    The proof of this lemma follows identically from the arguments presented in \cite{MN22} and \cite{MWWhandles}, with Bar-Natan homology taking the place of $\KhR_{2}$.
\end{proof}

\begin{remark}
    Over $\mathbb{F}_{2}$, this colimit perspective may 
    prove difficult to use for any direct computations, with difficulty largely attributed to the appearance of braid group representations of modules with $\mathbb{F}_{2}$ coefficients. There is no issue with choosing coefficients in a field more amicable to symmetrization, such as $\mathbb{Q}$, but this would come at the cost of the topological interpretation of the $H$-action.
\end{remark}

\subsection{Lasagna H-action for $(X,L)$} For reasons mentioned previously, we work over $\mathbb{F}_{2}$ for the remainder of this note. For a 4-manifold and boundary link pair $(X,L)$ and for a surface $S\subset{(X,L)}$, we adopt the notation $\skein_{0}^{Z}(\partial{X}\times{I};S):\skein_{0}^{Z}(X;L)\rightarrow{\skein_{0}^{Z}(X;L)}$ to denote the map defined by gluing the surface $S$ to fillings of $(X,L)$ along $L$.

\begin{figure}
    \begin{center}
        \includegraphics[scale=0.75]{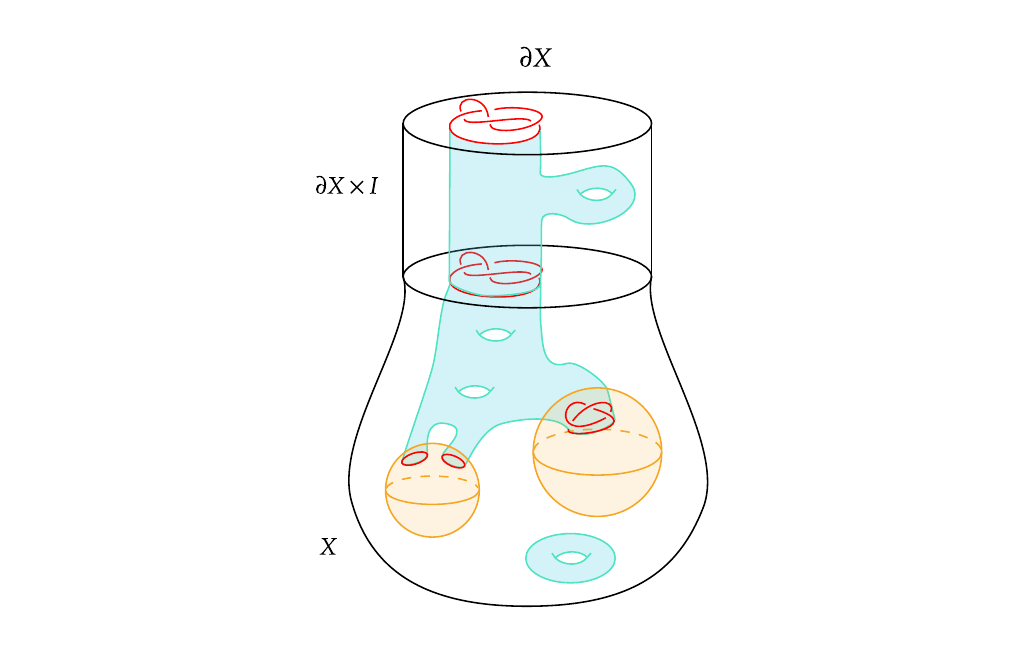}
    \end{center}
    \caption{The $H$-action map for $L\neq{\emptyset}$ as in Definition \ref{def:H-action}.}\label{fig2:H-action}
\end{figure}

\begin{definition}\label{def:H-action}
    Given a 4-manifold and boundary link pair $(X,L)$ where $L$ is not the empty link, we can interpret multiplication by $H$ as the following endomorphism on $\skein_{0}^{BN}(X;L)$
    \[
        H\cdot\underline{\;\;}\;:=\skein_{0}^{BN}(\partial{X}\times{I};T^{2}\#(L\times{I})):\skein_{0}^{BN}(X;L)\rightarrow{\skein_{0}^{BN}(X;L).}
    \]

Depicted in Figure \ref{fig2:H-action}, this $H$-action on $\skein_{0}^{BN}(X;L)$ has the topological effect of attaching an unknotted handle to a connected component of the skein surface of a Bar-Natan lasagna filling. Alternatively, if $L=\emptyset$, we realize the $H$-action as the creation of a local unknotted dotted torus in $\text{int}(X)$ by a neck-cutting on the skein surface (see Figure \ref{fig:2}), followed by an isotopy moving the dotted torus to the boundary of $X$. If $X$ is closed, we can remove an embedded $B^{4}$ to create such a boundary with no effect on $\skein_{0}^{BN}$. Letting $T^{2}_{\bullet}$ denote the dotted torus, we may define
\[
 H\cdot{\underline{\;\;}}:=\skein_{0}^{BN}(\partial X\times{I};T^{2}_{\bullet}):\skein_{0}^{BN}(X;\emptyset)\rightarrow{\skein_{0}^{BN}(X;\emptyset)}
\]

with $X$ replaced by $X\setminus{B^{4}}$ above in the case where $\partial X=\emptyset$. 

\end{definition}

With the $H$-action established, we define an analogue of $H$-torsion order.

\begin{definition}
    The \emph{lasagna H-torsion order} of a Bar-Natan lasagna element $\mathcal{F}\in{\skein_{0}^{BN}(X;L)}$ is
    \[
        \text{ord}_{H}^{(X,L)}(\mathcal{F})=\min{\{k\in{\mathbb{Z}_{\geq{0}}\;|\;H^{k}\cdot \mathcal{F}=0\}}}.
    \]
\end{definition}

When $X=B^{4}$, the lasagna $H$-torsion order and the usual $H$-torsion order of an element in Bar-Natan homology coincide. Hence, this definition is an extension of the usual notion of Bar-Natan homology $H$-torsion order (see \cite{Alishahi_2019Mathsci, gujral2020ribbondistanceboundsbarnatan} for more details and applications of this invariant for surfaces in $B^{4}$) for links in $S^{3}$ to surfaces in pairs $(X,L)$.

We complete this subsection by relating the invariant $\skein_{0}^{BN}$ to the $\KhR_{2}$ version. Recall that we recover the original Khovanov homology construction by setting $H=0$ in the Bar-Natan theory. 

\begin{lemma}\label{lem:linmapBNtoKhR} 
    There exists a linear map 
    \[
        F:\frac{\mathbb{F}_{2}[H]}{H}\otimes\skein_{0}^{BN}(X;L)\rightarrow{\skein_{0}^{2}(X;L;\mathbb{F}_{2})}
    \]
    obtained by applying a natural transformation $\frac{\mathbb{F}_{2}[H]}{H}\otimes BN(\underline{\;\;})\Rightarrow{\KhR_{2}(\underline{\;\;})}$
    to the labels on input links. The map $F$ is $H_{2}^{L}(X)\times{\mathbb{Z}_{t}}$-bigrading preserving. 
\end{lemma}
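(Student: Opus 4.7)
The plan is to construct $F$ from the natural transformation $\eta$ of TQFTs coming from the quotient of Frobenius algebras $A^{BN} = \mathbb{F}_2[H,X]/(X^2-HX) \twoheadrightarrow A^{BN}/(H) \cong \mathbb{F}_2[X]/(X^2) = A^{\KhR_2}$. This Frobenius algebra quotient is compatible with the multiplication, comultiplication, unit, and counit structures (the counit of $A^{BN}$ sends $1 \mapsto 0$, $X \mapsto 1$, matching that of $\KhR_2$ modulo $H$), so it defines a natural transformation $\eta: BN(\underline{\;\;}) \Rightarrow \KhR_2(\underline{\;\;})$ of functors from the cobordism category to $\mathbb{F}_2$-vector spaces. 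In particular, for every link $L \subset S^3$, one obtains an $\mathbb{F}_2$-linear map $\eta_L: BN(L) \to \KhR_2(L;\mathbb{F}_2)$ that is natural with respect to cobordism-induced maps.

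Next, I would define $F$ on lasagna fillings by applying $\eta$ to the labels of input links. Explicitly, for a $BN$-lasagna filling $(S, \{B_i, L_i, v_i\}_{i \in I})$ with $v_i \in BN(L_i)$, set
\[
    F\bigl([(S, \{B_i, L_i, v_i\})]\bigr) := \bigl[\bigl(S, \{B_i, L_i, \eta_{L_i}(v_i)\}\bigr)\bigr],
\]
extended multilinearly in the labels, and then $\mathbb{F}_2[H]/(H)$-linearly on the tensor product. Two verifications are needed: (i) the right-hand side descends through the skein equivalence relation of \cite{MWW-lasagna}; and (ii) the map is well-defined on the tensor product $\mathbb{F}_2[H]/(H) \otimes \skein_0^{BN}(X;L)$, i.e., kills $H$-multiples of lasagna fillings. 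For (i), the skein equivalence relation is generated by local moves that exchange a sub-filling for a cobordism-acted label or vice versa, together with multilinearity; since $\eta$ is a \emph{natural} transformation, it intertwines the $BN$- and $\KhR_2$-cobordism actions, so both sides of any skein relation are sent to equal elements. For (ii), multiplication by $H$ on a label $v_i$ amounts to acting on $v_i$ by the dotted-torus cobordism (Figure \ref{fig2:H-action}), and $\eta_{L_i}(H \cdot v_i) = 0$ since $H$ vanishes in $A^{BN}/(H)$; here one uses that the $H$-action can always be absorbed into the label of a single input link (after, if necessary, introducing an auxiliary $4$-ball as in Definition \ref{def:H-action}).

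Finally, the bigrading preservation is immediate from the construction: $\eta$ sends the generator $1 \in A^{BN}$ to $1 \in A^{\KhR_2}$ and $X \mapsto X$, both bidegree-preserving with respect to the shared $(h,q)$ conventions (recall $\deg(H)=(0,2)$, which is why the quotient is defined on $\mathbb{F}_2[H]/(H)$-tensor). The $H_2^L(X)$-grading is determined purely by the skein surface $S$, which $F$ does not alter, and the $t$-grading is determined by the $(h,q)$-shifts of the underlying fillings and the labels, both preserved.

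The main obstacle I anticipate is verification (i) above: carefully tracing through the precise form of the skein equivalence relation of \cite{MWW-lasagna} to confirm that the naturality of $\eta$ is strong enough to respect all the relations (including the identification of labels across gluings of input $4$-balls to sub-lasagna fillings and the compatibility with the handle/cabling perspective). Everything else is a direct consequence of the specialization $H \mapsto 0$ being a morphism of Frobenius algebras over $\mathbb{F}_2$.
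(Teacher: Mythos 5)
Your proposal is correct and follows essentially the same approach as the paper, though you construct the underlying natural transformation slightly differently. The paper observes that $\mathrm{CBN}(\underline{\;\;}) \otimes_{\mathbb{F}_2[H]} \mathbb{F}_2[H]/(H) \cong \mathrm{C}\KhR_2(\underline{\;\;};\mathbb{F}_2)$ at the chain level and then invokes the universal coefficient theorem to obtain the natural transformation $\mathbb{F}_2[H]/(H) \otimes BN(\underline{\;\;}) \Rightarrow \KhR_2(\underline{\;\;})$, asserting (without detail) that it respects the lasagna skein relation. You instead build the natural transformation $\eta\colon BN(\underline{\;\;})\Rightarrow \KhR_2(\underline{\;\;})$ directly from the Frobenius algebra quotient $A^{BN}\twoheadrightarrow A^{BN}/(H)\cong A^{\KhR_2}$ and observe that it kills $H$-multiples of labels, recovering the same factorization through $\mathbb{F}_2[H]/(H)\otimes\skein_0^{BN}(X;L)$ by hand rather than via UCT. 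Both constructions produce the same map, and your explicit verification that the naturality of $\eta$ suffices to respect the skein equivalence relation fills in the step the paper leaves as an assertion. The concern you raise at the end about tracing through all generators of the skein relation is legitimate diligence, but naturality against cobordism-induced maps is indeed exactly what is needed for the Morrison--Walker--Wedrich skein relation, so the proof goes through.
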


\begin{proof}
    Bar-Natan homology and $\KhR_{2}$ are related by tensoring with the module $\frac{\mathbb{F}_{2}[H]}{H}$ on the chain complex level. The universal coefficient theorem yields the corresponding natural transformation $\frac{\mathbb{F}_{2}[H]}{H}\otimes BN(\underline{\;\;})\Rightarrow{\KhR_{2}(\underline{\;\;})}$. This natural transformation respects the lasagna skein relation and therefore induces a morphism of skein modules.
\end{proof}

This lemma is an instance of Lemma 3.17 and Proposition 3.18 in \cite{mwwsurfaces}. It is also important to note that the natural transformation from $\frac{\mathbb{F}_{2}[H]}{H}\otimes{BN(\underline{\;\;})}$ to $\KhR_{2}(\underline{\;\;})$ is given by the universal coefficient theorem after tensoring with $\frac{\mathbb{F}_{2}[H]}{H}$ on the level of complexes.

\subsection{A connect sum gluing map for Bar-Natan skein lasagna} In order to extend exotically knotted pairs of surfaces in the 4-ball to other smooth 4-manifolds, it is necessary to understand the behavior of $\skein_{0}^{BN}$ on connect sums of 4-manifolds. Since we do not work over a field, this invariant does not admit a connect sum formula in the same manner as $\skein_{0}^{2}$ over field coefficients. Instead, we opt to study properties of the gluing map corresponding to the connect sum operation. Letting $X=X_{0}\cup_{Y}X_{1}$ where $Y$ is a properly embedded separating 3-manifold, there is a map $\phi:\skein_{0}^{BN}(X_{0}\sqcup{X_{1}};L_{0}\sqcup{L_{1})}\rightarrow{\skein_{0}^{BN}(X;L_{0}\sqcup{L_{1}})}$ corresponding to gluing $X_{0}$ to $X_{1}$ along $Y$ (for simplicity we assumed boundary links $L_{i}\subset{\partial{X_{i}}}$ do not intersect $Y$, although this is allowed in general, see \cite{MWWhandles}). Let
\[\phi_{\#}:\bigoplus_{L\subset{S^{3}}}\skein_{0}^{BN}(X_{0}\sqcup X_{1};L_{0}\sqcup L_{1}\sqcup L)\rightarrow \skein_{0}^{BN}(X_{0}\# X_{1};L_{0}\sqcup L_{1})
\]

denote the surjective homomorphism given by taking the connect sum of $X_{0}$ and $X_{1}$ with direct sum taken over links in the boundary of the connect sum $B^{4}$. Note also that there exists an injective homomorphism

\[
    \iota:\skein_{0}^{BN}(X_{0};L_{0})\otimes \skein_{0}^{BN}(X_{1};L_{1})\hookrightarrow \skein_{0}^{BN}(X_{0}\sqcup X_{1};L_{0}\sqcup L_{1})
\] given by applying the natural injective map $\mu:BN(K)\otimes{BN(K^{\prime})}\rightarrow{BN(K\sqcup K^{\prime})}$ to the input link labels. 

\begin{definition}
    Given 4-manifold and boundary link pairs $(X_{0},L_{0})$ and $(X_{1},L_{1})$ with skein surfaces $S_{0}$ and $S_{1}$ respectively. For each $S_{i}$, we let $K_{i}$ denote the input link after merging the input balls into a single $B^{4}$. We let $[(S_{0}\sqcup S_{1},v)]$ denote the skein equivalence class of the filling given by $(S_{0}\sqcup S_{1},v)$. We define the \emph{half torsion-free submodule} $\mathcal{R}(X_{0}\#X_{1};L_{0}\sqcup L_{1})$ of $\skein_{0}^{BN}(X_{0}\# X_{1};L_{0}\sqcup L_{1})$ as the submodule

    \[
        \mathcal{R}(X_{0}\# X_{1};L_{0}\sqcup L_{1}):=\mathbb{F}_{2}[H]\Big\langle [(S_{0}\sqcup S_{1},v)]\;|\;\text{Tor}_{1}^{\mathbb{F}_{2}[H]}(BN(K_{0}),BN(K_{1}))=0 \Big\rangle.
    \]
\end{definition}

In other words, $\mathcal{R}(X_{0}\# X_{1};L_{0}\sqcup L_{1})$ is the submodule generated by elements in $\skein_{0}^{BN}(X_{0}\# X_{1};L_{0}\sqcup L_{1})$ represented by a disjoint union of fillings whose Bar-Natan homology groups of their respective input links have pairwise vanishing torsion. We work with $\mathcal{R}(X_{0}\#X_{1};L_{0}\sqcup L_{1})$ to avoid the issues with torsion later on.

\begin{proposition}\label{prop:BNneckcut}
    The composition of maps $(\phi_{\#}\circ \iota)$ is invertible on $\mathcal{R}(X_{0}\# X_{1};L_{0}\sqcup L_{1})$. In particular, suppose that $[S_{0}\sqcup S_{1}]\in{\mathcal{R}}(X_{0}\# X_{1};L_{0}\sqcup L_{1})$, then
    \[
        \text{ord}_{H}^{(X_{0}\#X_{1},L_{0}\sqcup{L_{1}})}([S_{0}\sqcup{S_{1}}])=\text{ord}_{H}^{(X_{0}\sqcup{X_{1}},L_{0}\sqcup{L_{1}})}(\iota([S_{0}]\otimes{[S_{1}]})).
    \]
\end{proposition}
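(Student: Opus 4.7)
The plan is to construct an explicit two-sided $\mathbb{F}_2[H]$-linear inverse for $\phi_\# \circ \iota$ on $\mathcal{R}$ using the Künneth isomorphism for Bar-Natan homology, and then read off the $H$-torsion-order equality from the $\mathbb{F}_2[H]$-linearity of this isomorphism.

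First I would unpack the generators of $\mathcal{R}(X_0\#X_1;L_0\sqcup L_1)$: these are skein classes $[(S_0\sqcup S_1,v)]$ where $S_0,S_1$ lie in disjoint collar neighborhoods of the two connect summands and whose inner-ball input links $K_0,K_1$ satisfy $\mathrm{Tor}_1^{\mathbb{F}_2[H]}(BN(K_0),BN(K_1))=0$. Since $\mathbb{F}_2[H]$ is a PID and the Bar-Natan chain complex of a disjoint union factors as the tensor product of chain complexes, the Künneth short exact sequence makes $\mu:BN(K_0)\otimes BN(K_1)\to BN(K_0\sqcup K_1)$ an $\mathbb{F}_2[H]$-module isomorphism precisely under this Tor-vanishing hypothesis.

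Next I would define the candidate inverse $\Psi:\mathcal{R}\to\skein_0^{BN}(X_0;L_0)\otimes\skein_0^{BN}(X_1;L_1)$ on generators by
\[
\Psi([(S_0\sqcup S_1,v)]):=\sum_j[(S_0,a_j)]\otimes[(S_1,b_j)],
\]
where $\mu^{-1}(v)=\sum_j a_j\otimes b_j$. The identities $\Psi\circ(\phi_\#\circ\iota)=\mathrm{id}$ and $(\phi_\#\circ\iota)\circ\Psi=\mathrm{id}_{\mathcal{R}}$ follow on generators from $\mu\circ\mu^{-1}=\mathrm{id}$ and extend by $\mathbb{F}_2[H]$-linearity. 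The $H$-torsion-order equality is then immediate: both $\iota$ and $\phi_\#$ are $\mathbb{F}_2[H]$-linear, so $\phi_\#\circ\iota$ restricted to the cyclic submodule generated by $\iota([S_0]\otimes[S_1])$ is an $\mathbb{F}_2[H]$-module isomorphism onto the cyclic submodule generated by $[S_0\sqcup S_1]$, and such an isomorphism preserves the minimal $k$ for which $H^k$ annihilates an element.

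The main obstacle I anticipate is the well-definedness of $\Psi$: two split representatives of the same class in $\mathcal{R}$ could in principle be related through skein equivalences passing through non-split intermediate surfaces, and verifying that the lift $\mu^{-1}$ is consistent across such chains requires a careful analysis of the local moves (sphere, dotted sphere, torus, neck-cutting, and $H$-trading from Figure \ref{fig:localrels}). Each such local move is supported in a small 4-ball, so by choosing representatives appropriately one can ensure each ball meets at most one connect summand; the relation then descends to a skein equivalence in that one factor of the tensor product, and the Tor-vanishing hypothesis guarantees the lift under $\mu^{-1}$ is well defined at every stage.
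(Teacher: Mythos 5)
Your proposal is correct and takes essentially the same route as the paper's proof: both construct the inverse by applying the Künneth isomorphism $\mu^{-1}$ to the input-link labels on generators $[(S_0\sqcup S_1,v)]$ of $\mathcal{R}$, and both deduce the $H$-torsion-order equality from $\mathbb{F}_2[H]$-linearity of the resulting isomorphism. Your version is marginally more careful in two places the paper glosses over — you correctly write $\mu^{-1}(v)=\sum_j a_j\otimes b_j$ as a sum of simple tensors rather than a single pure tensor $y\otimes z$, and you explicitly flag the well-definedness issue for skein equivalences that pass through non-split intermediate surfaces, offering a plausible (if not fully detailed) resolution by localizing each move to one connect summand.
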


\begin{proof}
    We consider the composition of maps 
    \[
    \phi_{\#}\circ \iota:\skein_{0}^{BN}(X_{0};L_{0})\otimes{\skein_{0}^{BN}(X_{1};L_{1})}\rightarrow{\skein_{0}^{BN}(X_{0}\#X_{1};L_{0}\sqcup L_{1})}.
    \]

    We are done if we are able to define an inverse map $(\phi_{\#}\circ{\iota})^{-1}$ with the desired image when restricted to $\mathcal{R}(X_{0}\# X_{1};L_{0}\sqcup L_{1})$. Let $[(S_{0}\sqcup S_{1},v)]$ be a generator of $\mathcal{R}(X_{0}\# X_{1};L_{0}\sqcup L_{1})$. Then, letting $K_{0}$ and $K_{1}$ denote the input links of $S_{0}$ and $S_{1}$ respectively, we have that $v\in{BN(K_{0}\sqcup K_{1})}$. Since $BN(K_{0})$ and $BN(K_{1})$ satisfy the pairwise vanishing torsion condition, we have the natural isomorphism $\mu^{-1}:BN(K_{0}\sqcup K_{1})\xrightarrow{\cong}{BN(K_{0})\otimes BN(K_{1})}$. Note that the preimage of a generator $[(S_{0}\sqcup S_{1},v)]$ under the map $\phi_{\#}$ is the element in $\skein_{0}^{BN}(X_{0}\sqcup X_{1};L_{0}\sqcup L_{1})$ represented by the filling $(S_{0}\sqcup S_{1},v)$. Letting $\mu^{-1}(v)=y\otimes{z}$, we may then apply the map $\mu^{-1}$ to the input link labels to obtain the element $[(S_{0},y)]\otimes{[(S_{1},z)]}\in{\skein_{0}^{BN}(X_{0};L_{0})\otimes{\skein_{0}^{BN}(X_{1};L_{1})}}$. We note that $\phi_{\#}^{-1}([(S_{0}\sqcup S_{1},v)])$ is $\iota([(S_{0},y)]\otimes[(S_{1},z)])$, hence, the the lasagna $H$ torsion orders of $[(S_{0},y)]\otimes{[(S_{1},z)]}$ and $[(S_{0}\sqcup S_{1},v)]\in{\skein_{0}^{BN}(X_{0}\# X_{1};L_{0}\sqcup L_{1})}$ coincide.
\end{proof}

\begin{remark}
    The submodule $\mathcal{R}(X_{0}\# X_{1};L_{0}\sqcup L_{1})$ contains elements represented by surfaces connected through the 4-manifold connect sum region by cobordisms $L\times{I}$ for some link $L$ provided that $\text{Tor}_{1}^{\mathbb{F}_{2}[H]}(BN(K_{0}\sqcup L),BN(K_{1}\sqcup \overline{L}))$ vanishes. Letting $[S]$ denote the element corresponding to the connected surface and letting $\sum_{i}([S_{0,i}]\otimes{[S_{1,i}]})$ denote the result of neck-cutting along $L\times{I}$, we have that
    \[
    \text{ord}_{H}^{(X_{0}\# X_{1},L_{0}\sqcup L_{1})}([S])=\text{ord}_{H}^{(X_{0}\sqcup X_{1},L_{0}\sqcup L_{1})}(\sum_{i}\iota([S_{0,i}]\otimes{[S_{1,i}]})).
    \]
\end{remark}

We return to the ideas presented in this section in Section \ref{sec:4}.

\subsection{Deformations and decompositions into relative second homology}\label{subsubsec:rank}

In this section, we relate results of Ren--Willis \cite{RenWillis} and Morrison--Walker--Wedrich \cite{mwwsurfaces}. We then describe the structure of the free part of $\skein_{0}^{BN}(X;L)$ using a deformation of the invariant. Throughout the remainder of this work, we let $H_{2}(X)^{L}$ denote the $H_{2}(X,L)$-torsor given by the preimage $\partial^{-1}([L])$, where $\partial$ is the connecting homomorphism in the homology long exact sequence of the pair $(X,L)$.

\begin{definition}
    Let $L$ be a link in $S^{3}$ and let $H(L)$ denote the bigraded rank one free abelian group $\mathbb{F}_{2}$ concentrated in bidegree $(-\text{lk}(L),\text{lk}(L))$, generated by preimage $\partial^{-1}([L])$ in $H_{2}(B^{4},L)$.
\end{definition}

\begin{remark}
The invariant $H(L)$ is functorial for links in $S^{3}$, and is naturally isomorphic to the $\mathfrak{gl}_{1}$ link homology theory $\KhR_{1}$ as bigraded link homology theories. Thus, we let $\skein_{0}^{1}(X;L)$ denote the homological skein module constructed from the $H(\underline{\;\;})$ link homology theory. We note that any surface $S$ representing an element $\alpha\in{H_{2}(X)^{L}}$ yields an element $[S]\in{\skein_{0}^{1}(X;L,\alpha)}$.
\end{remark}

\begin{definition}
    Let $\Sigma=\{\lambda_{1},\lambda_{2}\}$ be a set of deformation parameters in $\mathbb{F}_{2}$. We let $\skein_{0}^{\Sigma}(X;L)$ denote the skein lasagna module built from the link homology theory obtained by replacing $E_{1}$ and $E_{2}$ in Definition \ref{def:frobpair} with $E_{1}(\lambda_{1},\lambda_{2})$ and $E_{2}(\lambda_{1},\lambda_{2})$.
\end{definition}

It is possible to characterize $\skein_{0}^{\Sigma}(X;L)$ in terms of $\skein_{0}^{1}(X;L)$ modules.

\begin{corollary}[Corollary 3.14 -- \cite{mwwsurfaces}]
    Let $\Sigma=\{1,0\}$, and let $C(L,\Sigma)$ denote the set of \emph{colorings} of the components of $L$ by elements of $\Sigma$. Let $L_{c^{-1}(i)}$ denote the sublink of $L$ with color $i$ colored by $c$. There is an isomorphism of $H_{2}(X)^{L}\times{\mathbb{Z}_{q}}\times{\mathbb{Z}_{h}}$-graded $\mathbb{F}_{2}$-vector spaces
    \begin{equation}\label{eq:gl1decomp}
        \skein^{\Sigma}_{0}(X;L)\cong{\bigoplus_{c\in{C(L,\Sigma)}}\skein_{0}^{1}(X;L_{c^{-1}(1)},\mathbb{F}_{2})\otimes{\skein_{0}^{1}(X;L_{c^{-1}(0)},\mathbb{F}_{2}})}
    \end{equation}
\end{corollary}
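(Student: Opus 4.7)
The plan is to reduce the statement to an algebraic decomposition at the level of the Frobenius pair $(R^{\Sigma}, A^{\Sigma})$ and then propagate the decomposition through the skein lasagna construction, using that the entire formalism is built functorially from the TQFT attached to this pair. First, the choice $\Sigma = \{1,0\}$ specializes the defining polynomial $X^{2} - E_{1}X + E_{2}$ to $X^{2} - X = X(X-1)$, which splits into distinct linear factors over $\mathbb{F}_{2}$. The Chinese Remainder isomorphism then gives
\[
A^{\Sigma} \;=\; \mathbb{F}_{2}[X]/(X(X-1)) \;\xrightarrow{\;\cong\;}\; \mathbb{F}_{2} \oplus \mathbb{F}_{2},
\]
with orthogonal idempotents $e_{1} = X$ and $e_{0} = 1+X$ projecting onto the two eigenspaces. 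Each direct summand is a copy of the $\mathfrak{gl}_{1}$ Frobenius algebra in the bidegree prescribed by the eigenvalue it selects.

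Next, I would lift this to the link homology theory. For a link $L \subset S^{3}$, iterating the algebra decomposition over the circles in any crossingless resolution produces a natural bigraded decomposition
\[
BN_{\Sigma}(L) \;\cong\; \bigoplus_{c \in C(L,\Sigma)} H(L_{c^{-1}(1)}) \otimes H(L_{c^{-1}(0)}).
\]
The essential verification is naturality under elementary cobordisms: cups and caps act diagonally via the idempotents $e_{i}$; merges and splits preserve colorings, with merges vanishing on differently-colored pairs, exactly reproducing the $\mathfrak{gl}_{1}$ saddle maps on each summand. This is the standard color decomposition familiar from the Lee deformation, and is the content that \cite{mwwsurfaces} checks in detail.

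Finally, since $\skein_{0}^{\Sigma}(X;L)$ is obtained by a colimit of cobordism maps between link-homology groups modulo skein relations generated by those same cobordism maps, a natural direct-sum decomposition of the underlying link-homology functor descends to a direct-sum decomposition of $\skein_{0}^{\Sigma}(X;L)$. A lasagna label on an input link splits as a sum of labels, one for each coloring of the components of that link, and the resulting skein relations on the $c$-summand are precisely the $\mathfrak{gl}_{1}$ skein relations for the pair $(L_{c^{-1}(1)}, L_{c^{-1}(0)})$. The main obstacle is bookkeeping the trigrading: one must check that the summand determined by a coloring $c$ lies in a single $H_{2}(X)^{L}$-component, and that the $\mathbb{Z}_{q}$-shifts arising from the eigenvalue normalization (proportional to $\mathrm{lk}(L_{c^{-1}(1)})$) match those in the $\mathfrak{gl}_{1}$ theory $H(\underline{\;\;})$. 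This follows once one observes that homologically diverse fillings are detected by their relative homology class in the $\mathfrak{gl}_{1}$ lasagna module, so each colored summand is cleanly supported in a single component of the grading and the isomorphism respects all three gradings simultaneously.
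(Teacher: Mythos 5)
Your proposal is correct and follows essentially the same route as the cited reference (the present paper cites Corollary 3.14 of \cite{mwwsurfaces} without giving its own proof): the Chinese Remainder splitting $A^{\Sigma}\cong\mathbb{F}_{2}e_{1}\oplus\mathbb{F}_{2}e_{0}$ with orthogonal idempotents $e_{1}=X$, $e_{0}=1+X$, the resulting coloring decomposition of the deformed link homology functor compatible with all elementary cobordisms, and the descent of this natural direct-sum decomposition through the skein relations defining the lasagna module are exactly the ingredients used there. The $q$-grading bookkeeping you flag is indeed where the most care is needed, since the idempotents are not $q$-homogeneous and one must match the quantum shifts on the $c$-indexed summands against the normalization of $H(\underline{\;\;})$, but this is a verification rather than a gap in the approach.
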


\begin{remark}
    Let $S$ be an oriented surface in $(X,L)$ with no closed components. For a coloring $c\in{C(L,\Sigma)}$, the element corresponding to $[S]\in{\skein_{0}^{\Sigma}(X;L)}$ under the isomorphism in \eqref{eq:gl1decomp} is the tensor product $[S_{c^{-1}(1)}]\otimes{[S_{c^{-1}(0)}]}$, where $[S_{c^{-1}(\lambda_{i})}]$ denotes the relative second homology class given by the subsurface $S_{c^{-1}(\lambda_{i})}$ whose boundary is the sublink colored $\lambda_{i}$. We can relax the conditions on $S$ to include closed components as long as $S$ remains homologically diverse.
\end{remark}

We then adopt the notation $H_{2}^{L,\times{2}}(X)$ from \cite{RenWillis} to denote the set of relative \emph{double classes} of the pair $(X,L)$. Defined as

\[
    H_{2}^{L,\times{2}}(X):=\{(\alpha_{-},\alpha_{+})\in{H_{2}(X;L)^{2}\;|\;\partial{\alpha_{\pm}}=\sum_{i}\epsilon_{i,\pm}[L_{i}],\;\epsilon_{i,\pm}\in{\{0,1\}},\epsilon_{i,+}+\epsilon_{i,-}=1\}}.
\]

We also require the notion of a \emph{skein category}, defined as follows.

\begin{definition}
    Let $(X,L)$ be a 4-manifold and boundary link pair. The corresponding \emph{skein category} $\mathcal{C}(X;L)$ has skein surfaces of $(X,L)$ as its objects, and, for any $\Sigma_{0}$, $\Sigma_{1}\in{\text{Ob}(\mathcal{C}(X;L))}$ with input data $\{B_{0,i},L_{0,i}\}_{i\in{I}}$ and $\{B_{1,j},L_{1,j}\}_{j\in{J}}$ respectively, the morphisms are 
    \[
        \text{Hom}(\Sigma_{0},\Sigma_{1})=\{([S]:\Sigma_{0}\rightarrow{\Sigma_{1}},f)\}
    \]
    where $[S]$ is the isotopy class of a surface $S$ such that $S|_{\partial{B_{0,i}}}=\Sigma_{0}|_{\partial{B_{0,i}}}$ and $S|_{\partial{B_{1,j}}}=\Sigma_{1}|_{\partial{B_{1,j}}}$, and $f$ is an isotopy class of isotopies $S\cup{\Sigma_{1}}$ to $\Sigma_{0}$.

\end{definition}

For more details, see \cite{RenWillis} Section 2. 
We now study a \emph{localized} version of the invariant $\skein_{0}^{BN}(X;L)$ with $H=1$. Let $BN_{H=1}$ denote the link homology theory obtained by setting $H=1$ in the Bar-Natan Frobenius algebra $A^{BN}$ as in Definition \ref{def:localizedBN}. 
Equivalently, $BN_{H=1}$ is the link homology theory obtained by deforming the $U(2)$-equivariant Frobenius pair by $\Sigma=\{1,0\}$. Note that the skein lasagna module constructed from $BN_{H=1}$ is exactly the deformation $\skein_{0}^{\Sigma}(X;L)$ for $\Sigma=\{1,0\}$, and therefore satisfies analogous properties. Our goal is to prove the following.

\begin{proposition}\label{prop:forfreepart}
    The rank of the free part of $\skein_{0}^{BN}(X;L)$ is bounded below by the dimension of $\skein_{0}^{BN_{H=1}}(X;L)$. Furthermore, there is an isomorphism
    \[
    \Phi:\skein_{0}^{BN_{H=1}}(X;L)\xrightarrow{\cong}{\mathbb{F}_{2}^{H_{2}^{L,\times{2}}(X)}}
    \]
    and there exists a basis for $\skein_{0}^{BN_{H=1}}(X;L)$ of canonical generators that correspond to elements of $H_{2}^{L,\times{2}}(X)$.
\end{proposition}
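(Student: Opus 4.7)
The plan is to observe that $BN_{H=1}$ is exactly the deformation $\Sigma = \{1,0\}$ referenced in the paper, so the corollary from \cite{mwwsurfaces} applies directly: we get
\[
\skein_0^{BN_{H=1}}(X;L) \;\cong\; \bigoplus_{c \in C(L,\Sigma)} \skein_0^1(X;L_{c^{-1}(1)}) \otimes \skein_0^1(X;L_{c^{-1}(0)}).
\]
The first task is then to compute each tensor factor. Since $H(\underline{\;\;}) \cong \KhR_1$ is bigraded of total rank one, and since the remark in Section \ref{subsubsec:rank} tells us that any representative surface of a class $\alpha \in H_2(X)^L$ yields an element $[S] \in \skein_0^1(X;L,\alpha)$, I would argue (via the cabling/directed-system description or directly from the $\gl_1$-TQFT) that $\skein_0^1(X;L,\alpha) \cong \mathbb{F}_2$ for each $\alpha$, and hence
\[
\skein_0^1(X;L) \;\cong\; \mathbb{F}_2^{H_2(X)^L}.
\]

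The key combinatorial step is then to identify the resulting index set with $H_2^{L,\times 2}(X)$. Given a coloring $c$ and a pair of relative classes $\alpha_+ \in H_2(X)^{L_{c^{-1}(1)}}$, $\alpha_- \in H_2(X)^{L_{c^{-1}(0)}}$, the boundary condition $\partial \alpha_\pm = \sum_i \epsilon_{i,\pm}[L_i]$ with $\epsilon_{i,+} = c(L_i) = 1 - \epsilon_{i,-}$ is exactly the condition defining a double class. Conversely every double class $(\alpha_-,\alpha_+)$ determines a unique coloring and a unique pair of relative homology data. So the disjoint union of the index sets across colorings bijects with $H_2^{L,\times 2}(X)$, yielding the claimed isomorphism
\[
\Phi : \skein_0^{BN_{H=1}}(X;L) \xrightarrow{\cong} \mathbb{F}_2^{H_2^{L,\times 2}(X)}.
\]
The canonical generators are then produced by choosing, for each $(\alpha_-,\alpha_+)$, representative subsurfaces $S_\pm$ with the prescribed boundary and homology class; the class $[S_- \sqcup S_+]$ is the desired generator.

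For the free-rank statement, the natural transformation of Frobenius pairs $(R^{BN},A^{BN}) \Rightarrow (R^{BN}_{H=1},A^{BN}_{H=1})$ given by $H \mapsto 1$ induces a map on lasagna modules that factors through the specialization $\skein_0^{BN}(X;L) \otimes_{\mathbb{F}_2[H]} \mathbb{F}_2 \twoheadrightarrow \skein_0^{BN_{H=1}}(X;L)$ (surjective because every generator of the target is the image of a generator of the source). Since $(H-1)$ acts invertibly on any finitely generated $H$-torsion $\mathbb{F}_2[H]$-module, tensoring over $\mathbb{F}_2[H]$ with the $H \mapsto 1$ module $\mathbb{F}_2$ kills the torsion part and detects exactly the rank of the free part. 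Combining this with the surjection gives
\[
\mathrm{rank}_{\mathbb{F}_2[H]}(\text{free part of } \skein_0^{BN}(X;L)) \;\geq\; \dim_{\mathbb{F}_2} \skein_0^{BN_{H=1}}(X;L),
\]
as desired. The step I expect to require the most care is verifying that $\skein_0^1(X;L,\alpha) \cong \mathbb{F}_2$; everything else is bookkeeping built on the decomposition cited from \cite{mwwsurfaces} and a direct identification of indexing sets.
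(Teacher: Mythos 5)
Your route is essentially the one the paper itself acknowledges in a single sentence (``This Proposition follows from the isomorphism in \eqref{eq:gl1decomp}, but for completeness, we reprove this result using the terminology in \cite{RenWillis}'') and then declines to work out. You unpack the cited Corollary 3.14 of \cite{mwwsurfaces}: reduce to the $\gl_1$ lasagna modules, observe that $\skein_0^1(X;L,\alpha)\cong\mathbb{F}_2$, and check that the index set $\bigsqcup_{c}H_2(X)^{L_{c^{-1}(1)}}\times H_2(X)^{L_{c^{-1}(0)}}$ is exactly $H_2^{L,\times 2}(X)$ --- all of which is correct, and the bijection of index sets is a genuine point that the paper never spells out. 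The paper instead replays the Ren--Willis argument directly: it introduces canonical fillings $x(\Sigma_+,\Sigma_-)$ labeled by double skein data, proves a Bar-Natan analogue of the Ren--Willis lemmas (Lemma \ref{lem:RW-lemmasforH=1}), and defines $\epsilon$ on these generators. The Ren--Willis route buys an explicit surface-level description of the generators in terms of second orientations $\mathfrak{D}$, which is what Corollary \ref{cor:divsurfacesforF2} and the later non-vanishing arguments actually consume; your route is cleaner algebraically but would still require identifying your tensor-product generators $[S_+\sqcup S_-]$ with these double-skein fillings to feed into the rest of the paper. Two points deserve tightening on your side: (i) $\skein_0^1(X;L,\alpha)\cong\mathbb{F}_2$ is not proven in the paper and is genuinely the load-bearing input --- you flag this correctly, and it does follow from \cite{mwwsurfaces}, but you should cite where; (ii) your free-rank argument that tensoring with $\mathbb{F}_2[H]/(H-1)$ ``detects exactly the rank of the free part'' needs the hypothesis that the torsion of $\skein_0^{BN}(X;L)$ is $H$-power torsion (as opposed to, say, $(H-1)$-torsion, which would survive the specialization and inflate the count); this is the implicit convention in the paper but you should say it, since otherwise the chain of inequalities runs the wrong way.
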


This proposition follows from the isomorphism in \eqref{eq:gl1decomp}, but for completeness, we reprove this result using the terminology in \cite{RenWillis}. Recall that a \emph{double skein} is an oriented surface $\Sigma\subset{(X,L)}$ and a partition $\Sigma=\Sigma_{+}\cup{\Sigma_{-}}$ of the surface given by a second orientation $\mathfrak{D}$. If the original orientation agrees (resp. disagrees) with $\mathfrak{D}$ on certain components, we denote this subsurface by $\Sigma_{+}$ (resp. $\Sigma_{-}$).

\begin{definition}
    A \emph{canonical Bar-Natan lasagna filling}, denoted $x(\Sigma_{+},\Sigma_{-})$, is a filling defined by labeling each boundary link $L_{i}$ with the canonical generator $x_{\mathfrak{D}}|_{L_{i}}$, and we label the components of $\Sigma_{+}$ with \textbf{a} and $\Sigma_{-}$ with \textbf{b}, where $\textbf{a}=x-1$ and $\textbf{b}=x$. We note that, unlike the skein lasagna module construction for Lee homology, we do not require any re-scaling with this theory, as the discriminant for this localized theory is $1$.
\end{definition}

\begin{lemma}\label{lem:RW-lemmasforH=1} The following items are analogues of results from \cite{RenWillis}. 
    \begin{enumerate}
        \item Let $\Sigma\subset{(X,L)}$ be a skein surface, then every lasagna filling with surface $\Sigma$ in the free part of $\skein_{0}^{BN_{H=1}}(X;L)$ is skein equivalent to a linear combination of canonical Bar-Natan lasagna fillings.
        \item Let $S:\Sigma\rightarrow{\Sigma^{\prime}}$ be a morphism in the category of skeins $\mathcal{C}(X;L)$ that respects double skein structures, then the map induced by $S$ in $\skein_{0}^{BN_{H=1}}(X;L)$ maps $x(\Sigma_{+},\Sigma_{-})$ to $x(\Sigma^{\prime}_{+},\Sigma^{\prime}_{-})$.
    \end{enumerate}
\end{lemma}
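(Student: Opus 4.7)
The plan is to mirror the corresponding proofs in \cite{RenWillis} for Lee homology, exploiting the observation in Definition \ref{def:localizedBN} that $BN_{H=1}$ is equivalent to Lee homology over $\mathbb{F}_{2}$. The key algebraic input is that the underlying Frobenius algebra $\mathbb{F}_{2}[X]/(X^{2}+X)$ splits as a direct sum of two orthogonal idempotents $\mathbf{a}=X+1$ and $\mathbf{b}=X$, satisfying $\mathbf{a}^{2}=\mathbf{a}$, $\mathbf{b}^{2}=\mathbf{b}$, $\mathbf{a}\mathbf{b}=0$, and $\mathbf{a}+\mathbf{b}=1$ in characteristic $2$.

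For part (1), I would first expand an arbitrary filling $(\Sigma,v)$ in this idempotent basis. Each $BN_{H=1}(L_{i})$ acquires a canonical basis indexed by orientations $\mathfrak{D}_{i}$ of $L_{i}$, where the basis element $x_{\mathfrak{D}_{i}}$ is the tensor product of $\mathbf{a}$'s and $\mathbf{b}$'s determined by the orientation on each component. By multilinearity of the labeling data, the filling is a linear combination of fillings whose input link labels are canonical generators. For each such term, the input orientations $\{\mathfrak{D}_{i}\}$ either extend to a coherent second orientation $\mathfrak{D}$ of the skein surface $\Sigma$---yielding a double skein structure $(\Sigma_{+},\Sigma_{-})$---or they do not. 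In the extending case, I would use neck-cutting on each connected component of $\Sigma_{+}$ and $\Sigma_{-}$ in a collar of each input ball, together with the skein equivalence relation, to install the dot labels $\mathbf{a}$ and $\mathbf{b}$ required by the canonical filling, producing exactly $x(\Sigma_{+},\Sigma_{-})$. In the non-extending case, some connected component of $\Sigma$ is forced to mediate between incompatible idempotents, and via the Frobenius multiplication $\mathbf{a}\mathbf{b}=0$, the corresponding contribution vanishes in the skein module.

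For part (2), I would compute the action of a morphism $S:\Sigma\rightarrow\Sigma'$ respecting double skein structures directly on the generator $x(\Sigma_{+},\Sigma_{-})$. Decomposing $S$ into elementary pieces (births, deaths, saddles, and handle attachments), compatibility with the double orientations forces every saddle to occur entirely within $\Sigma_{+}$ or entirely within $\Sigma_{-}$. Using $\mathbf{a}^{2}=\mathbf{a}$ and $\mathbf{b}^{2}=\mathbf{b}$, each elementary piece preserves the canonical dot labeling on connected components: births produce the canonical idempotent matching the receiving component of $\Sigma'_{\pm}$, while deaths are absorbed via the Frobenius counit, again preserving canonical form. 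The cumulative effect is that the induced map takes $x(\Sigma_{+},\Sigma_{-})$ to $x(\Sigma'_{+},\Sigma'_{-})$.

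The main obstacle is rigorously justifying the vanishing claim in part (1) when the input orientations fail to extend across $\Sigma$. This amounts to showing that every cobordism contribution involving both $\mathbf{a}$ and $\mathbf{b}$ on a single connected component factors through the multiplication $\mathbf{a}\mathbf{b}=0$. A careful case analysis on the oriented components of $\Sigma$, combined with the neck-cutting relation in Figure \ref{fig:localrels} specialized to $H=1$, reduces this to the Frobenius identities above; the remainder of the argument is parallel to the Lee-homology proof in \cite{RenWillis} transferred to $\mathbb{F}_{2}$ coefficients.
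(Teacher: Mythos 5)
Your proposal is correct and takes essentially the same approach as the paper's proof. The paper simply observes that the argument of Ren--Willis adapts directly for part (1), and for part (2) cites the Rasmussen-style formula
\[
BN_{H=1}(S)(\mathfrak{s}_{o}) = \sum_{\mathfrak{D}|_{L}=o}\mathfrak{s}_{\mathfrak{D}|_{L^{\prime}}}
\]
to conclude that canonical generators are sent to canonical generators (up to terms with incompatible orientations). Your idempotent expansion, the extending-versus-non-extending dichotomy, the reduction of the vanishing case to $\mathbf{a}\mathbf{b}=0$, and the elementary-cobordism decomposition in part (2) are just the unpacked form of the same computation; what you spell out piece by piece, the paper compresses into a citation of \cite{RenWillis} and \cite{rasinv-genus}.

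One small remark: since $BN_{H=1}$ is a theory over the field $\mathbb{F}_{2}$ (because $\mathbb{F}_{2}[H]/(H-1)\cong\mathbb{F}_{2}$), the qualifier ``in the free part'' is vacuous --- the whole module is already free --- so your argument does not need to, and correctly does not, single out a torsion subcase.
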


\begin{proof}
    The argument in \cite{RenWillis} for the Lee lasagna module adapts immediately to this setting for part (1). For part (2), we make the following observation about the localized theory. Let $S:L\rightarrow{L^{\prime}}$ be a framed cobordism and let $\mathfrak{s}_{o}$ be a canonical generator corresponding to the orientation $o$ of $L$. Then
    \[
    BN_{H=1}(S)(\mathfrak{s}_{o})=\sum_{\mathfrak{D}|_{L}=o}\mathfrak{s}_{\mathfrak{D}|_{L^{\prime}}}
    \]
    where the sum runs over all orientations of $S$ that are compatible with the orientation $o$ on $L$. This follows from \cite{rasinv-genus} using a modification of the Frobenius algebra and $\mathbb{F}_{2}$ coefficients. As a consequence of this, on the tensor product of generators of the boundary links of a skein surface $\Sigma$, we have
    \[
    BN_{H=1}(S)(\otimes_{i}{\mathfrak{s}_{\mathfrak{D}|_{L_{i}}}})=\otimes_{j}\mathfrak{s}_{\mathfrak{D}|_{L^{\prime}_{j}}}+Y
    \]
    where $Y$ consists of terms with incompatible orientations. Thus, the map induced by $S$ on $x(\Sigma_{+},\Sigma_{-})$ as a morphism in $\mathcal{C}(X;L)$ maps $x(\Sigma_{+},\Sigma_{-})\mapsto{x(\Sigma^{\prime}_{+},\Sigma^{\prime}_{-})}$.
\end{proof}

\begin{proof}{(\textit{of Proposition \ref{prop:forfreepart}})} As in \cite{RenWillis}, define a map $\epsilon:\skein_{0}^{BN_{H=1}}(X;L)\rightarrow{\mathbb{F}_{2}^{H_{2}^{L,\times{2}}(X)}}$ by $x(\Sigma_{+},\Sigma_{-})\mapsto{e_{([\Sigma_{+}],[\Sigma_{-}])}}$, the generator of the $([\Sigma_{+}],[\Sigma_{-}])$-th coordinate of $\mathbb{F}_{2}^{H_{2}^{L,\times{2}}(X)}$. Lemma \ref{lem:RW-lemmasforH=1} implies that $\epsilon$ is well-defined, and the remainder of the argument presented in \cite{RenWillis} occurs entirely in the skein category before the application of $BN_{H=1}$, so the result follows. \end{proof}

\begin{remark}
    One may expect these similarities between $\skein_{0}^{Lee}$ and $\skein_{0}^{BN_{H=1}}$, as the localized theory is equivalent to Lee homology \cite{khovanov2020linkhomologyfrobeniusextensions}.
\end{remark}

We now wish to verify that homologically diverse surfaces are non-trivial modulo $H$-torsion in $\skein_{0}^{BN}(X;L)$ with $\mathbb{F}_{2}$ coefficients.

\begin{corollary}\label{cor:divsurfacesforF2}
    Let $S\subset{(X,L)}$ be a homologically diverse surface, then the element $[S]\in{\skein_{0}^{BN}(X;L)}$ is non-trivial modulo $H$-torsion.
\end{corollary}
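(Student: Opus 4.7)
The plan is to show that $[S]$ survives the localization at $H$, which automatically implies it is non-trivial modulo $H$-torsion. Inverting $H$ gives a natural transformation $BN(\,\cdot\,) \Rightarrow BN_{H=1}(\,\cdot\,)$, which induces a morphism of skein lasagna modules
\[
\psi \colon \skein_0^{BN}(X;L) \longrightarrow \skein_0^{BN_{H=1}}(X;L).
\]
On the target, $H$ acts as the unit, so the kernel of $\psi$ contains the entire $H$-torsion submodule. Hence it suffices to prove $\psi([S]) \neq 0$.

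Composing with the isomorphism $\Phi$ of Proposition \ref{prop:forfreepart}, we land in $\mathbb{F}_2^{H_2^{L,\times 2}(X)}$, which has $\{e_{(\alpha_+,\alpha_-)}\}$ as an $\mathbb{F}_2$-basis. I would then exhibit $\psi([S])$ in canonical form. Introduce a small input ball cutting $S$ along a sublink consisting of a parallel pushoff of $L$ together with a meridian on each closed component of $S$; the natural label $1$ on each component of this cut link decomposes in $BN_{H=1}$ as $1 = \mathfrak{s}_+ + \mathfrak{s}_-$, since $X^2 + X = X(X+1)$ over $\mathbb{F}_2$. Pushing this decomposition through and applying Lemma \ref{lem:RW-lemmasforH=1}(2) to the connecting cobordism yields
\[
\psi([S]) \;=\; \sum_{\mathfrak{D}} x\!\bigl(S_+^{\mathfrak{D}},\,S_-^{\mathfrak{D}}\bigr),
\]
where $\mathfrak{D}$ ranges over second orientations of $S$ restricting to the fixed orientation on $L$. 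Under $\Phi$ this becomes $\sum_{\mathfrak{D}} e_{([S_+^{\mathfrak{D}}],\,[S_-^{\mathfrak{D}}])}$.

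The last step is to apply homological diversity. The orientations $\mathfrak{D}$ form a torsor over the $\mathbb{F}_2$-vector space spanned by the closed components of $S$. For $\mathfrak{D}_1 \neq \mathfrak{D}_2$ differing on a non-empty subfamily $C$ of closed components, the corresponding double classes agree if and only if $[C] = 0$ in $H_2(X)$. By assumption on $S$, this never happens, so the basis vectors appearing in $\Phi(\psi([S]))$ are pairwise distinct. Therefore $\Phi(\psi([S])) \neq 0$, so $\psi([S]) \neq 0$ and $[S]$ is non-trivial modulo $H$-torsion.

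The main obstacle I anticipate is justifying the precise decomposition formula for $\psi([S])$, which is an $\mathbb{F}_2$ Bar-Natan analogue of the Ren--Willis expansion into canonical Lee generators. Once one has both halves of Lemma \ref{lem:RW-lemmasforH=1} together with the idempotent splitting $1 = \mathfrak{s}_+ + \mathfrak{s}_-$, the argument is essentially bookkeeping; the only subtlety is ensuring the decomposition is well-defined in the presence of closed components, which is precisely where the input balls cutting $S$ meridionally become necessary.
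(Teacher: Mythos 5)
Your proof is correct and takes essentially the same approach as the paper's terse one-line argument (which cites the analogous result in mwwsurfaces and Proposition \ref{prop:forfreepart}): localize at $H$ so that $H$-torsion dies, then use the canonical decomposition and homological diversity to show $[S]$ maps to a non-zero element of $\skein_0^{BN_{H=1}}(X;L) \cong \mathbb{F}_2^{H_2^{L,\times 2}(X)}$. You have simply made explicit the steps the paper leaves to the cited reference.
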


\begin{proof}
    As in the proof of the analogous result in \cite{mwwsurfaces}, homologically diverse surfaces represent non-trivial generators of $\mathbb{F}_{2}^{H_{2}^{L,\times{2}}(X)}$ which is isomorphic to $\skein_{0}^{BN_{H=1}}(X;L)$ by Proposition \ref{prop:forfreepart}. The result then follows.
\end{proof}

\section{Extending the exotically knotted pairs of surfaces in the 4-ball to other 4-manifolds}\label{sec:4}

We now explain the skein lasagna module interpretation of the results of Hayden in \cite{hayden2023atomic}. For each disk $D$ and $D^{\prime}$ with common boundary $K_{H}$, we define lasagna fillings represented by punctured, time-reversed disks $[-\overset{\circ}{D}]$ and $[-\overset{\circ}{D^{\prime}}]$, with a single input ball with an unknot on the boundary equipped with label $\Phi_{U}^{-1}((1,0))$. The propositions in Remark \ref{prop:hay4.1,4.2}, Lemma \ref{lem:4balllasagnaisZ}, and Lemma \ref{lem:unreducedworks} then imply that these fillings are not equal in $\skein_{0}^{BN}(B^{4};K_{H})$.

\begin{remark}
    Occasionally, we start with a surface $S\subset{(X,L)}$, then discuss the filling $[S]$ \emph{corresponding} to $S$. By this, we mean to take the surface $S$, with the possible addition of some framing-changing input balls, as the lasagna filling representing $[S]$.
\end{remark}

\begin{definition} 
Letting these fillings be denoted $\mathcal{F}_{0}$ and $\mathcal{F}^{\prime}_{0}$ respectively, we define the Bar-Natan lasagna \emph{distinguishing element} $\delta^{L}_{0}=\mathcal{F}_{0}-\mathcal{F}^{\prime}_{0}$. This is precisely the element $\delta_{0}\in{BN(K_{H})}$ defined in Lemma \ref{lem:unreducedworks}, reinterpreted as an element in the Bar-Natan skein lasagna module. Similarly, for the exotic genus $g$ surfaces $F_{g}$ and $F_{g}^{\prime}$ in \cite{hayden2023atomic}, we let $[F_{g}]$ and $[F_{g}^{\prime}]$ denote the corresponding Bar-Natan fillings in $(B^{4},K_{H})$, and define the distinguishing element $\delta^{L}_{g}:=[F_{g}]-[F_{g}^{\prime}]$.
\end{definition}

\begin{remark} As stated in Lemma \ref{lem:4balllasagnaisZ}, when the 4-manifold and boundary link pair is $(B^{4},L)$, skein lasagna modules recover the TQFT from which they were constructed, allowing for the following restatement in terms of internal stabilization distance.
\end{remark}

\begin{corollary}\label{cor:Haydeninterpretation}
    Let $\delta^{L}_{0}$ and $\delta^{L}_{g}$ be defined as above, then for all $g\geq{0}$, we have that $\text{ord}^{(B^{4},K_{g})}_{H}(\delta^{L}_{g})>1$.
\end{corollary}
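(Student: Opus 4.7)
The plan is to reduce this corollary to Hayden's $H$-nontriviality result via the identifications already assembled in the preliminaries. First, I would apply Lemma \ref{lem:4balllasagnaisZ} to identify $\skein_0^{BN}(B^4;K_g)$ with $BN(K_g)$ as $\mathbb{F}_2[H]$-modules. Under this identification, the filling $\mathcal{F}_g$ (the punctured time-reversed surface $-\overset{\circ}{F_g}$ with its unique input unknot labeled by $v_0 := \Phi_U^{-1}((1,0))$) corresponds to $BN(-\overset{\circ}{F_g})(v_0) \in BN(K_g)$, and similarly for $\mathcal{F}_g'$. Hence $\delta_g^L$ matches the element $\delta_g := BN(-\overset{\circ}{F_g})(v_0) - BN(-\overset{\circ}{F'_g})(v_0)$, and since Lemma \ref{lem:4balllasagnaisZ} is an isomorphism of $\mathbb{F}_2[H]$-modules it preserves $H$-torsion order, so it suffices to show $H\cdot \delta_g \neq 0$ in $BN(K_g)$.

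Next, I would pass to the reduced theory using the Wigderson splitting (Theorem \ref{thrm:wigsplit}). Because $\Phi_{K_g}$ is $\mathbb{F}_2[H]$-linear and natural with respect to cobordism-induced maps, the image of $\delta_g$ is of the form $(\tilde\delta_g, 0)$ in $\tilde{BN}_x(K_g)\oplus \tilde{BN}_1(K_g)$, where $\tilde\delta_g$ is the analogous difference computed in reduced Bar-Natan homology. Exactly as in the proof of Lemma \ref{lem:unreducedworks}, this gives $\text{ord}_H(\delta_g) = \text{ord}_H(\tilde\delta_g)$, so the problem is further reduced to showing $H\cdot \tilde\delta_g \neq 0$.

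Finally, I would invoke Hayden directly. For $g=0$ the needed inequality $H\cdot \tilde\delta_0 \neq 0$ is precisely Remark \ref{prop:hay4.1,4.2}. For $g\geq 1$, the surfaces $F_g$ and $F'_g$ are obtained from the disks $D,D'$ by connect summing with the fiber surface of $T_{2,2g-1}$, and the Bar-Natan argument underlying Theorem \ref{thrm:Hayden-A} gives the same conclusion at the level of reduced maps: the maps $\tilde{BN}(-\overset{\circ}{F_g})$ and $\tilde{BN}(-\overset{\circ}{F'_g})$ remain distinct after multiplication by $H$, since otherwise one internal stabilization would dissolve the exotic pair. This yields $H\cdot \tilde\delta_g \neq 0$, and combining the three steps gives $\text{ord}_H^{(B^4,K_g)}(\delta_g^L) > 1$.

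The main obstacle is not mathematical but bookkeeping: matching the unreduced input-label choice $\Phi_U^{-1}((1,0))$ with the basepoint generator used in Hayden's reduced-homology arguments under the Wigderson splitting, and confirming that the $H$-nontriviality in Hayden's proof of Theorem \ref{thrm:Hayden-A} is uniform across all $g\geq 1$ (rather than only for $g=1$, after which higher genus would require a separate cabling argument).
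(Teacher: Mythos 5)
Your proposal is correct and follows essentially the same chain of identifications the paper uses: Lemma \ref{lem:4balllasagnaisZ} to recover $BN(K_g)$ as an $\mathbb{F}_2[H]$-module from $\skein_0^{BN}(B^4;K_g)$, the Wigderson splitting (Theorem \ref{thrm:wigsplit}) together with Lemma \ref{lem:unreducedworks} to reduce to the reduced theory, and then Hayden's Remark \ref{prop:hay4.1,4.2} and Theorem \ref{thrm:Hayden-A} for the underlying $H$-nontriviality. The paper in fact treats the corollary as an immediate restatement and does not write out a proof block; you have simply made explicit the implicit reduction, and the bookkeeping concerns you flag (naturality of the splitting under the punctured cobordism maps, and the uniformity of Hayden's $H$-nontriviality in $g$) are likewise left implicit in the paper, resting on the naturality asserted in Theorem \ref{thrm:wigsplit} and on Hayden's argument for all $g\geq 1$.
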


\begin{remark} Unfortunately, the Bar-Natan homology package, unlike link Floer homology (in reference to the results of Guth in \cite{guth2022exotic}), does not immediately produce exotically knotted pairs of surfaces with arbitrarily large internal stabilization distance in $B^{4}$. This leads to some difficulty in producing ``one is not enough'' results in other 4-manifolds through a direct application of the gluing map in \ref{prop:BNneckcut}. However, this difficulty can be overcome by restricting to certain types of homologically diverse surfaces. We say that a surface $S\subset{(X,L)}$ has \emph{local genus} $g$ if there exists a 4-ball $B$ embedded in $\text{int}(X)$ such that $S\cap{B^{4}}$ is a punctured oriented surface of genus $g$.
\end{remark}

\begin{definition}\label{def:primitive}
    For a homologically diverse surface $S\subset{(X,L)}$, a corresponding Bar-Natan lasagna filling is \emph{primitive} if represents a a non-trivial free generator $[S]\in\skein_{0}^{BN}(X;L,\alpha)$ and $H$ does not divide $[S]$.
\end{definition}

\begin{lemma}
    For homologically diverse surface $S\subset{(X,L)}$, a filling $[S]$ is primitive if and only if $S$ has no local genera and $[S]$ is not equivalent in $\skein_{0}^{BN}(X;L)$ to a sum of elements given by fillings each with surfaces having local genera.
\end{lemma}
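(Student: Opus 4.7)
\medskip

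The plan is to leverage the topological interpretation of the $H$-action established in Definition \ref{def:H-action} and Figure \ref{fig:2}: multiplication by $H$ on any Bar-Natan lasagna filling is realized as adding a local unknotted handle to the skein surface (equivalently, via neck-cutting, as disjoint union with a local dotted torus). This gives the basic dictionary ``local genus $\leftrightarrow$ divisibility by $H$'' on the level of fillings, from which both directions of the lemma fall out.

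For the forward direction, assume the filling $[S]$ is primitive, so in particular $H \nmid [S]$ in $\skein_{0}^{BN}(X;L)$. If $S$ had a local genus, then by Figure \ref{fig:2} (performing neck-cutting along the meridian of that local handle and then recognizing the disjoint dotted torus as multiplication by $H$), we could write $[S] = H \cdot [S']$ where $S'$ is $S$ with the local handle removed. This contradicts primitivity. Similarly, if $[S] = \sum_i c_i [S_i]$ where every $S_i$ has a local genus, apply the same observation to each $S_i$ to obtain $[S_i] = H \cdot [S_i']$, hence $[S] = H \cdot \sum_i c_i [S_i']$, again contradicting $H \nmid [S]$.

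For the backward direction, assume $S$ has no local genera and $[S]$ is not equivalent to a sum of fillings whose surfaces all have local genera. The non-triviality-modulo-$H$-torsion half of primitivity is supplied directly by the hypothesis that $S$ is homologically diverse, via Corollary \ref{cor:divsurfacesforF2}, which shows $[S]$ projects to a non-trivial element of the free part of $\skein_{0}^{BN}(X;L)$. It remains to verify $H \nmid [S]$. Arguing by contradiction, suppose $[S] = H \cdot v$ for some $v \in \skein_{0}^{BN}(X;L)$. Writing $v = \sum_i c_i [T_i]$ as a linear combination of filling classes, Definition \ref{def:H-action} produces
\[
[S] \;=\; \sum_i c_i\, H\cdot[T_i] \;=\; \sum_i c_i\, [T_i \# T^{2}],
\]
where each filling $T_i \# T^{2}$ has a local genus (the new handle attached to realize the $H$-action). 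This exhibits $[S]$ as a sum of fillings whose surfaces each have local genera, contradicting the hypothesis.

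The only real subtlety, and the step that deserves care, is making sure the ``topological $H$'' identifications are used in both directions: once we need to convert a pre-existing local handle into a factor of $H$ (by isotoping the handle into a small ball, neck-cutting, and applying the evaluation of the dotted torus), and once we need to convert a formal factor of $H$ into an additional local handle on a representative filling. Both are exactly what Definition \ref{def:H-action} asserts, so no new input beyond the bookkeeping of representative surfaces is required.
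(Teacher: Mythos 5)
Your proof is correct and follows essentially the same approach as the paper's: both directions reduce to the topological dictionary from Definition \ref{def:H-action} and Figure \ref{fig:2} identifying the $H$-action with attaching a local handle (or disjoint union with a local dotted torus), so that local genera trade for factors of $H$ and vice versa. Your backward direction is slightly cleaner in its bookkeeping — writing $v=\sum_i c_i[T_i]$ and distributing the $H$-handle across summands, and explicitly invoking Corollary \ref{cor:divsurfacesforF2} for the ``non-trivial free generator'' half of Definition \ref{def:primitive} — but the underlying argument is the same one the paper carries out via input-ball surgery.
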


\begin{proof}
    Suppose first that $S\subset{(X,L)}$ has local $g$-genera and let $B$ denote the corresponding $4$-ball. We can isotope the subsurface $S\cap{B}$ into a collar neighborhood of $\partial{X}$. This isotopy realizes $S$ as the image of the surface $S^{\prime}=S\setminus{(S\cap{B})}\cup_{U}D^{2}$ (the surface obtained by removing the genus $g$ subsurface and replacing it with a disk) under $g$ many $H$-maps, implying that $[S]$ is divisible by $H$. Suppose, alternatively, that the lasagna filling is not primitive, then $[S]=H^{k}\cdot{v}$ for some $v\in{\skein_{0}^{BN}(X;L)}$, $k\in{\mathbb{N}}$. It is sufficient to let $k=1$. Recall that, over $\mathbb{F}_{2}$ coefficients, an $H$ factor is interchangeable with a $T^{2}$ connect summand. The presence of this $H$ factor implies that, for some input ball and link pair $(B_{i},L_{i})$ in $[S]$, the lasagna filling $[S]$ is equivalent to a filling $[S^{\prime}]$ obtained by replacing $(B_{i},L_{i})$ with a slightly smaller $B_{i}^{\prime}\subset{B_{i}}$ such that $S^{\prime}\cap{(B_{i}\setminus{B_{i}^{\prime}})}=(L_{i}\times{I})\# T^{2}$. Thus, we can choose yet another small input ball $B_{i}^{\prime \prime}$ in the $S^{3}\times{I}$ region between $B_{i}$ and $B_{i}^{\prime}$ such that $B_{i}^{\prime \prime}\cap{S^{\prime}}=T^{2}\setminus{pt}$. Therefore, $S$ is equivalent to a filling $S^{\prime}$ with local genera.

    The case where $L=\emptyset$ is similar. If $S$ has local genera, then we may neck-cut (see Figure \ref{fig:2}) to produce a dotted torus embedded in $\text{int}(X)$. After removing the 4-ball, we may isotope the dotted torus to the newly created $S^{3}$ boundary.

    Alternatively, if $S\subset{(X,L)}$ has no local genera but is skein equivalent to a sum of fillings with surfaces with local genera, then we may apply the argument above on each summand.
\end{proof}

We now state the main result allowing for the extension of non-stabilizing exotic knotted pairs of surfaces to other 4-manifolds.

\begin{proposition}\label{prop:main}
    Let $S\subset{(X,L)}$ be a homologically diverse surface with a primitive filling $[S]\in{\skein_{0}^{BN}(X;L)}$, then for any $g\geq{1}$, the surfaces $F_{g}\sqcup{S}$ and $F^{\prime}_{g}\sqcup{S}$ form an exotically knotted pair in $(X\setminus{B^{4}},K_{g}\sqcup L)$, and remain exotic after a single internal stabilization.
\end{proposition}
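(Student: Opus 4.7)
The plan is to verify that $F_g \sqcup S$ and $F'_g \sqcup S$ are topologically isotopic rel boundary in $X \setminus B^4$, then to use the Bar-Natan lasagna module to show they are not smoothly isotopic, and remain distinct after one internal stabilization. For the topological step, I would choose the removed open $B^4 \subset X$ to be disjoint from $S$, so that $F_g, F'_g$ sit in a collar of the newly created $S^3$ boundary with $\partial F_g = \partial F'_g = K_g$. Hayden's topological isotopy rel boundary between $F_g$ and $F'_g$ inside $B^4$ then extends by the identity on $S$ to an ambient topological isotopy of $F_g \sqcup S$ and $F'_g \sqcup S$ rel boundary in $X \setminus B^4$.

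For the smooth obstruction, I view $X \setminus B^4$ as the interior connect sum $X \# B^4$ along an $S^3$ separating the small $B^4$ containing the exotic surface from the rest of $X$. Applying Proposition \ref{prop:BNneckcut} with $X_0 = X$, $X_1 = B^4$, $L_0 = L$, $L_1 = K_g$, I get
\[
[F_g \sqcup S] - [F'_g \sqcup S] = (\phi_\# \circ \iota)([S] \otimes \delta_g^L) \in \skein_0^{BN}(X \setminus B^4; K_g \sqcup L).
\]
The $\mathcal{R}$-condition holds: the input link used to represent $F_g$ and $F'_g$ is an unknot, whose Bar-Natan homology $BN(U) \cong \mathbb{F}_2[H]^2$ is free, so $\text{Tor}_1^{\mathbb{F}_2[H]}(BN(K_0), BN(U)) = 0$ for any input link $K_0$ coming from $S$. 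Since $H \cdot ([F_g \sqcup S] - [F'_g \sqcup S]) = (\phi_\# \circ \iota)([S] \otimes H\delta_g^L)$ and $H$-torsion orders are preserved by $(\phi_\# \circ \iota)^{-1}$, it suffices to show that $[S] \otimes H\delta_g^L$ is nonzero in $\skein_0^{BN}(X; L) \otimes_{\mathbb{F}_2[H]} \skein_0^{BN}(B^4; K_g)$.

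Corollary \ref{cor:Haydeninterpretation} gives $H\delta_g^L \neq 0$ in $\skein_0^{BN}(B^4; K_g) = BN(K_g)$, equivalently $\mathrm{ann}(\delta_g^L) = (H^k)$ for some $k \geq 2$. Since $[S]$ is primitive, I would decompose $\skein_0^{BN}(X; L, \alpha) = F \oplus T$ as free-plus-torsion and write $[S] = \sum_i p_i(H) e_i$ in a basis of $F$; the primitivity hypothesis forces $p_i(0) \neq 0$ for some $i$, so $p_i \cdot H \notin (H^k)$ when $k \geq 2$, giving $p_i(H) \cdot H\delta_g^L \neq 0$ in $BN(K_g)$ and hence $[S] \otimes H\delta_g^L \neq 0$ in the tensor product. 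Combined with injectivity of $\iota$ and Proposition \ref{prop:BNneckcut}, this yields $H \cdot ([F_g \sqcup S] - [F'_g \sqcup S]) \neq 0$; because an internal stabilization acts on Bar-Natan lasagna elements as multiplication by $H$ (Definition \ref{def:H-action}), this also obstructs smooth isotopy after a single internal stabilization. The main obstacle I expect is justifying the free-plus-torsion splitting that places $[S]$ in a basis of the free summand, which likely requires restricting to a single bigrading where finite generation over $\mathbb{F}_2[H]$ makes the splitting clean, or equivalently producing a projection $\pi : \skein_0^{BN}(X;L) \to \mathbb{F}_2[H]$ with $\pi([S])$ a unit.
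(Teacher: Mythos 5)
Your proposal follows the same overall strategy as the paper: reduce the question to the injectivity of $\iota$ and the behavior of $\phi_{\#}\circ\iota$ under Proposition \ref{prop:BNneckcut}, invoke Corollary \ref{cor:Haydeninterpretation} for the $H$-torsion order of $\delta_g^L$, and use primitivity of $[S]$ to conclude that $\delta_g^L\otimes[S]$ has the same lasagna $H$-torsion order. Your argument is a legitimate blow-up of the paper's rather terse one, with two additions that the paper glosses over and that are worth noting. First, you verify the $\mathcal{R}$-condition explicitly via freeness of $BN(U)$, which is the right thing to check since the $F_g$-side inputs come from the unknot label $\Phi_U^{-1}((1,0))$ used to define the fillings. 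Second, you address the topological-isotopy side of ``exotic pair,'' which the paper leaves implicit.

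The ``obstacle'' you flag at the end --- that $[S]$ should sit as a basis element of a free summand --- is resolved by the paper's Definition \ref{def:primitive}, which requires a primitive filling to be ``a non-trivial free generator'' with $H\nmid[S]$; the free-plus-torsion decomposition is thus an input to the hypothesis, not something to be proven. Your weaker reading (only $H\nmid[S]$, so $[S]=\sum_i p_i(H)e_i + t$ with some $p_i(0)\neq 0$) does not quite follow: if the torsion part $t$ is not $H$-divisible, then $H\nmid[S]$ can hold with all $p_i(0)=0$. Under the paper's definition this cannot happen since $[S]=e_i$, so the tensor-product computation you run --- $p_i(H)\cdot H\cdot\delta_g^L$ nonzero in $\mathbb{F}_2[H]/(H^k)$ for $k\geq 2$ --- goes through, and your argument is correct once you use the stronger hypothesis the paper actually imposes. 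Beyond that, the step $H\cdot(\phi_\#\circ\iota)=(\phi_\#\circ\iota)\cdot H$ implicitly used in your display deserves a word (the $H$-action is a boundary-local operation, hence commutes with the interior gluing map), but this is consistent with Proposition \ref{prop:BNneckcut}'s assertion that $H$-torsion orders are preserved.
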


\begin{proof}
    This follows directly from the definition of primitive fillings and Proposition \ref{prop:BNneckcut}. For simplicity, we present the proof in the case where the boundary link of $X$ is the empty link; the proof is identical in the other case.
    
    Let $S$ be a homologically diverse surface in $(X,\emptyset)$, and let $[S]$ be a corresponding primitive filling. Then the element $[S]$ is a free generator with no $H$ factors in $\skein_{0}^{BN}(X)$. By Corollary \ref{cor:Haydeninterpretation}, we have that $\delta^{L}_{g}$ is non-trivial and $H$-torsion in $\skein_{0}^{BN}(B^{4};K_{g})$. Then, by Proposition \ref{prop:BNneckcut}, we have that the lasagna $H$-torsion order of the element given by $[F_{g}\sqcup S]-[F^{\prime}_{g}\sqcup S]$ is equal to $\text{ord}_{H}^{(X\setminus{B^{4}},K_{g}\sqcup L)}(\iota(\delta_{g}^{L}\otimes{[S]}))$. The element $\iota(\delta_{g}^{L}\otimes{[S]})$ is equal to $\delta_{g}^{L}\otimes{[S]}$ in the isomorphic copy of $\skein_{0}^{BN}(B^{4};K_{g})\otimes{\skein_{0}^{BN}(X;L)}$ in $\skein_{0}^{BN}(B^{4}\sqcup{X};K_{g}\sqcup{L})$, implying $[F_{g}\sqcup S]-[F^{\prime}_{g}\sqcup S]$ is non-trivial with the same lasagna $H$-torsion order as $\delta^{L}_{g}$.
\end{proof}

Before we provide any explicit examples, we need a method to determine if the filling represented by a homologically diverse surface is indeed primitive.

\begin{lemma}\label{lem:primitiveskein}
    Recall the linear map $F:\frac{\mathbb{F}_{2}[H]}{H}\otimes\skein_{0}^{BN}(X;L)\rightarrow{\skein_{0}^{2}(X;L)}$ from Lemma \ref{lem:linmapBNtoKhR}. For a homologically diverse surface $S\subset{(X,L)}$, if the image of it's corresponding filling $F([S])\in{\skein_{0}^{BN}(X;L)}$ is non-zero, then $[S]\in{\skein_{0}^{BN}(X;L)}$ is primitive.
\end{lemma}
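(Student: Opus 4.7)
The plan is to unpack the definition of primitive: I need to check that (a) $[S]$ is a non-trivial free generator of $\skein_0^{BN}(X;L,\alpha)$, and (b) $H$ does not divide $[S]$. Part (a) should come for free from the homologically diverse hypothesis on $S$, while part (b) should be the contrapositive of the fact that $F$ kills everything in the image of multiplication by $H$.

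First I would handle part (a) by invoking Corollary \ref{cor:divsurfacesforF2}: since $S$ is homologically diverse, the class $[S] \in \skein_0^{BN}(X;L)$ is non-trivial modulo $H$-torsion, so it represents a non-trivial element of the free part in the appropriate $H_2(X)^L$-graded piece, i.e.\ a non-trivial free generator in tri-degree determined by $[S]$. This step requires no new work beyond citing the earlier corollary.

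Next I would address part (b) by analyzing the map $F$ from Lemma \ref{lem:linmapBNtoKhR}. The natural transformation $BN(\underline{\;\;}) \Rightarrow \KhR_2(\underline{\;\;})$ used to build $F$ is induced by the Frobenius algebra quotient setting $H = 0$; equivalently, $F$ is obtained by tensoring over $\mathbb{F}_2[H]$ with $\mathbb{F}_2[H]/(H)$, and therefore factors through the quotient $\skein_0^{BN}(X;L)/H\cdot \skein_0^{BN}(X;L)$. Consequently, for any $v \in \skein_0^{BN}(X;L)$, one has $F(H\cdot v) = 0$.

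For the final step, I would argue by contrapositive. Suppose $[S]$ is not primitive. By the hypothesis of homological diversity together with part (a), the only way this can fail is condition (b), so $[S] = H \cdot v$ for some $v \in \skein_0^{BN}(X;L)$. Then by the previous paragraph, $F([S]) = F(H\cdot v) = 0$, contradicting the assumption $F([S]) \neq 0$. Hence $[S]$ must be primitive. The main obstacle is essentially bookkeeping: making sure that the tensor product appearing in the domain of $F$ is interpreted as the $\mathbb{F}_2[H]$-module tensor product so that $F$ genuinely factors through $\skein_0^{BN}/H$, and that the homological diversity hypothesis is being used in exactly the right way (namely, via Corollary \ref{cor:divsurfacesforF2}) to rule out the possibility that $[S]$ lies entirely in the $H$-torsion submodule.
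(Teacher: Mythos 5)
Your proof is correct and takes essentially the same approach as the paper's: the crux in both is that $F$ is obtained by tensoring with $\mathbb{F}_{2}[H]/(H)$, so it kills $H\cdot v$ for any $v$, and hence $F([S])\neq 0$ forces $H\nmid [S]$. The only difference is cosmetic: the paper phrases the $H$-divisibility argument geometrically via the preceding lemma on local genera, while you go straight to the algebraic statement $[S]=H\cdot v \Rightarrow F([S])=0$. You are also slightly more explicit than the paper in separating out and citing Corollary \ref{cor:divsurfacesforF2} for the ``non-trivial free generator'' half of Definition \ref{def:primitive}, which the paper leaves implicit in the homological diversity hypothesis; that is a reasonable thing to spell out.
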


\begin{proof}
    Given a filling $\mathcal{F}\in{\skein_{0}^{BN}(X;L)}$, the image $F(\mathcal{F})\in{\skein_{0}^{2}(X;L)}$ is a filling consisting of the same topological data as $\mathcal{F}$, with input link labels obtained by setting $H=0$ in the labels of $\mathcal{F}$. If a surface $S\subset{(X,L)}$ has local genus (or is equivalent in $\skein_{0}^{BN}(X;L)$ to a sum of fillings each with local genus), then the corresponding filling is of the form $H\cdot{v}$ for some $v\in{\skein_{0}^{BN}(X;L)}$ and therefore $F([S])=0\in{\skein_{0}^{2}(X;L)}$.
\end{proof}

\begin{remark}
    We remark now on the steps necessary to produce examples of exotically knotted pairs of surfaces as in Proposition \ref{prop:main}. Suppose that $S\subset{X}$ is an embedded surface in 4-manifold $X$, and suppose that $S$ is homologically diverse. To verify that $S$ admits a primitive filling in $\skein_{0}^{BN}(X;L)$, we must begin with a corresponding filling $[S]_{\KhR_{2}}$ in $\skein_{0}^{2}(X;L;\mathbb{F}_{2})$. We then must check that $[S]_{\KhR_{2}}\neq{0}$ through some pre-existing non-vanishing criterion and that $F^{-1}([S]_{\KhR_{2}})$ is non-empty. If these criteria are met, we may obtain a non-zero element $1\otimes{[S]}$ in $F^{-1}([S]_{\KhR_{2}})$, implying the filling $[S]\in{\skein_{0}^{BN}(X;L)}$ is primitive. We may then consider the $BN$ lasagna filling represented by the surfaces $F_{g}\sqcup S$ and $F_{g}^{\prime}\sqcup S$ in $X\setminus{B^{4}}$, where the common boundary $K_{g}$ is a boundary link on $\partial{B^{4}}$. Proposition \ref{prop:BNneckcut} allows us to identify the fillings $[F_{g}\sqcup S]$ and $[F_{g}^{\prime}\sqcup S]$ with tensor products $[F_{g}]\otimes{[S]}$ and $[F_{g}^{\prime}]\otimes[S]$ respectively, implying that $\delta_{g}^{L}\otimes{[S]}=[F_{g}]\otimes{[S]}-[F_{g}^{\prime}]\otimes{[S]}\neq{0}$ and has lasagna $H$-torsion order equal to that of $\delta_{g}^{L}$.
\end{remark}

\begin{remark}\label{rmk:f+}
    An explicit non-vanishing criterion for $\skein_{0}^{2}(X;\emptyset;\mathbb{F}_{2})$ is not known, only interpretations of other non-vanishing results using $\mathbb{F}_{2}$ coefficients. If such a criterion existed, it would still be necessary to describe the skein surfaces that represent non-zero Bar-Natan lasagna fillings in order to produce explicit examples. We expect that this is possible for $D^{2}$-bundles over $S^{2}$ with nonpositive Euler number. Fixing some $n\leq{0}$, let $D(n)$ denote the corresponding disk bundle and let $S\subset{D(n)}$ denote the sphere with self-intersection $[S]\cdot{[S]}=-n$. Next, let $f_{-}\in{BN(U)}$ denote the label of the -1-framed input unknot on an input ball in $D(n)$. In other words, $f_{-}$ is the image of the birth cobordism composed with the corresponding Reidemeister 1 map. We then construct the skein surface $S^{\prime}$ and filling $[S^{\prime}]$ as 
    \[
        [S^{\prime}]:=(S\setminus{\bigsqcup_{i=1}^{n}D_{i}^{2}},f_{-}\otimes{...}\otimes{f_{-}}).
    \]
    This new skein surface $S^{\prime}$ is given by removing $n$ disks from $S$, and adding input balls, with -1-framed boundary unknots labeled by $f_{-}$.
\end{remark}

\begin{remark}\label{rmk:RWcp2bar} 
    As a consequence of Proposition 1.15 in \cite{RenWillis} and Theorem 2.1(1) \cite{RenLee}, we have the following results for the pair $(\overline{\mathbb{C}P^{2}},\emptyset)$ (the $D^{2}$-bundle over $S^{2}$ with Euler number -1) over field coefficients:
    \[
        \skein_{0,-2p^{2},2p^{2}+2p}(\overline{\mathbb{C}P^{2}};\emptyset;0)\cong{\mathbb{F}},\; (p\geq{0})
    \]
    \[
        \skein_{0,-2p^{2}+2p,2p^{2}-1}(\overline{\mathbb{C}P^{2}};\emptyset;1)\cong{\mathbb{F}},\;(p\geq{1})
    \]

In particular, the empty skein and the sphere of self intersection -1 (with a single input ball with a -1-framed input unknot and label $f_{-}$) represent non-trivial elements in $\skein_{0}^{2}(\overline{\mathbb{C}P^{2}};\emptyset)$ (see \cite{RenWillis} Section 6). We now proceed with the construction of exotic surfaces in $\overline{\mathbb{C}P^{2}}$ using the exotic knotted pairs of Hayden.
\end{remark}

\begin{example}\label{ex:cp2bar}
    Let $X=\overline{\mathbb{C}P^{2}}$ and let $\overline{\mathbb{C}P^{1}}$ denote the sphere of -1 self-intersection. Note that $\overline{\mathbb{C}P^{1}}$ is homologically diverse in $(\overline{\mathbb{C}P^{2}},\emptyset)$, as it represents a generator of $H_{2}(\overline{\mathbb{C}P^{2}})\cong{\mathbb{Z}}$. Let $[\overline{\mathbb{C}P^{1}}]\in{\skein_{0}^{BN}(\overline{\mathbb{C}P^{2}};\emptyset)}$ and $[\overline{\mathbb{C}P^{1}}]_{\KhR_{2}}\in{\skein_{0}^{2}(\overline{\mathbb{C}P^{2}};\emptyset;\mathbb{F}_{2})}$ denote the fillings obtained from $\overline{\mathbb{C}P^{1}}$ as in Remark \ref{rmk:f+} in their respective lasagna modules. Let $f^{BN}_{-}$ and $f^{\KhR_{2}}_{-}$ denote the unknot labels for the skein surfaces of $[\overline{\mathbb{C}P^{1}}]_{BN}$ and $[\overline{\mathbb{C}P^{1}}]_{\KhR_{2}}$ respectively. The natural map $\mu:\frac{\mathbb{F}_{2}[H]}{H}\otimes{BN(\underline{\;\;})}\rightarrow{\KhR_{2}(\underline{\;\;})}$ is given by realizing $\KhR_{2}$ as the homology of the Bar-Natan complex tensored with $\frac{\mathbb{F}_{2}[H]}{H}$ then applying the universal coefficient theorem. Note that, for the homologies of the -1-framed unknot, $\mu(f_{-}^{BN})=f_{-}^{\KhR_{2}}$. Hence, the filling $[\overline{\mathbb{C}P^{1}}]_{BN}$ is mapped to $[\overline{\mathbb{C}P^{1}}]_{\KhR_{2}}$ by the composition of maps

    \begin{equation}\label{eq:composition}
        \skein_{0}^{BN}(\overline{\mathbb{C}P^{2}};\emptyset,1)\xrightarrow{1\otimes{\underline{\;\;}}}{\frac{\mathbb{F}_{2}[H]}{H}}\otimes{\skein_{0}^{BN}(\overline{\mathbb{C}P^{2}};\emptyset,1)}\xrightarrow{F}\skein_{0}^{2}(\overline{\mathbb{C}P^{2}};\emptyset,1)
    \end{equation}

    where $F$ is the linear map from Lemma \ref{lem:linmapBNtoKhR}. Since the filling $[\overline{\mathbb{C}P^{1}}]_{\KhR_{2}}$ is a generator of $\skein_{0}^{2}(\overline{\mathbb{C}P^{2}};\emptyset)$, we have that, $1\otimes{[\overline{\mathbb{C}P^{1}}]_{BN}}\in{\frac{\mathbb{F}_{2}[H]}{H}\otimes{\skein_{0}^{BN}(\overline{\mathbb{C}P^{2}}};\emptyset)}$ is non-zero, as it's image under the linear map from Lemma \ref{lem:linmapBNtoKhR} is the $\KhR_{2}$ filling with an identical skein surface and label given by $f_{-}^{\KhR_{2}}$. This implies, by Lemma \ref{lem:primitiveskein}, that $[\overline{\mathbb{C}P^{1}}]$ is a primitive filling in $\skein_{0}^{BN}(\overline{\mathbb{C}P^{2}};\emptyset)$.

    We may now apply Proposition \ref{prop:BNneckcut} and Proposition \ref{prop:main} for the torsion element $\delta_{g}^{L}$, we have that
    \[
    \text{ord}_{H}^{(B^{4}\sqcup \overline{\mathbb{C}P^{2}},K_{g})}(\iota(\delta_{g}^{L}\otimes{[S]}))=\text{ord}_{H}^{(\overline{\mathbb{C}P^{2}}\setminus{B^{4}};K_{g})}([F_{g}\sqcup \overline{\mathbb{C}P^{1}}]-[F_{g}^{\prime}\sqcup \overline{\mathbb{C}P^{1}}])=\text{ord}_{H}(\delta_{g}^{L})>1
    \]

    This directly implies that the surfaces $F_{g}\sqcup \overline{\mathbb{C}P^{1}}$ and $F^{\prime}_{g}\sqcup \overline{\mathbb{C}P^{1}}$ form an exotically knotted pair in $(\overline{\mathbb{C}P^{2}}\setminus{B^{4}};K_{g})$ that do not become smoothly isotopic after a single internal stabilization.

    We can perform an even simpler maneuver for the element represented by the empty skein. Clearly, the element $[\emptyset]_{BN}$ is mapped to $[\emptyset]_{\KhR_{2}}$ by the composition in \eqref{eq:composition}. Since $[\emptyset]_{\KhR_{2}}\neq{0}$ in $\skein_{0}^{2}(\overline{\mathbb{C}P^{2}};\emptyset;0)$, we have that $[\emptyset]_{BN}$ is non-zero and primitive in $\skein_{0}^{BN}(\overline{\mathbb{C}P^{2}};\emptyset;0)$. Hence, we similarly have that
    \[
    \text{ord}_{H}^{(\overline{\mathbb{C}P^{2}}\setminus{B^{4}};K_{g})}([F_{g}]-[F^{\prime}_{g}])=\text{ord}_{H}(\delta^{L}_{g})>1
    \]
    as desired. This implies that the exotically knotted pairs $(F_{g},F^{\prime}_{g})$ remain exotic and do not stabilize after one internal stabilization after a 2-handle is attached away from the boundary knot $K_{g}$.
\end{example}

Similar arguments may hold for $S^{2}\times{D^{2}}$ and $D^{2}$-bundles over $S^{2}$ using the non-vanishing results of Manolescu-Neithalath \cite{MN22} and Ren-Willis \cite{RenWillis}.

\begin{remark}
    Note that for a pair of 4-manifold and boundary link pairs $(X_{0},L_{0})$ and $(X_{1},L_{1})$, and skein surfaces $S_{0}$ and $S_{1}$ respectively with $K_{0}$ and $K_{1}$ as respective input links, that $\text{Tor}_{1}^{\mathbb{F}_{2}[H]}(BN(K_{0}),BN(K_{1}))=0$ implies that a filling $[S_{0}\#S_{1}]$ is an element of ${\mathcal{R}(X_{0}\#X_{1};L_{0}\sqcup L_{1})}$ as neck-cutting the connect-sum region only produces an additional unknot. Letting $S^{\bullet}$ denote the surface $S$ with a dot, we have that Proposition \ref{prop:main} states
    \begin{align*}
        \text{ord}_{H}^{(X_{0}\#X_{1},L_{0}\sqcup{L_{1}})}([S_{0}\#S_{1}])&=\text{ord}_{H}^{(X_{0}\sqcup X_{1},L_{0}\sqcup L_{1})}([S_{0}^{\bullet}\sqcup S_{1}]+[S_{0}\sqcup S_{1}^{\bullet}]+H[S_{0}\sqcup S_{1}])\\
        &=\text{ord}_{H}([S_{0}^{\bullet}]\otimes{[S_{1}]}+[S_{0}]\otimes{[S_{1}^{\bullet}]}+H([S_{0}]\otimes{[S_{1}]}))
    \end{align*}
\end{remark}

\begin{remark}
    We may conclude then that, if $K$ is a knot that bounds exotic surfaces in $B^{4}$ that induce different maps on Bar-Natan homology, then if $(X,L)$ contains a homologically diverse surface that admits a primitive Bar-Natan filling, the knot $K$ as a \emph{local} link in $X\setminus{B^{4}}$ continues to bound exotic surfaces in the new 4-manifold. Furthermore, if the induced maps remain distinct after multiplication by $H$, the new exotic surfaces remain distinct after an internal stabilization. 
\end{remark}

\begin{remark}
    For a 4-manifold $X$ and a homologically diverse surface $S\subset{X}$ with primitive filling, it may be interesting to consider the effect of \emph{external stabilization} on the double branched covers of pairs $(X,F_{g}\sqcup S)$ and $(X,F_{g}^{\prime}\sqcup S)$. Recall that, for an exotic pair of $4$-manifolds $(X_{0},X_{1})$, an external stabilization is the operation corresponding to taking a connect sum with $S^{2}\times{S^{2}}$. The \emph{external stabilization conjecture} claims the following.

    \begin{conjecture}[\cite{CTCWall}]
        Let $(X_{0},X_{1})$ be an exotic pair of closed, simply-connected smooth 4-manifolds, then $X_{0}\#(S^{2}\times{S^{2}})$ is diffeomorphic to $X_{1}\#(S^{2}\times{S^{2}})$.
    \end{conjecture}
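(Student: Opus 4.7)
The final statement is the $k=1$ form of Wall's stabilization conjecture, which is a long-standing open problem well beyond the reach of current smooth 4-manifold technology; I do not expect to resolve it. What follows is an honest proof proposal in the spirit of the paper: a plan for a functorial attack using $\skein_{0}^{BN}$, together with a description of why it falls short.

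The plan is to attack the contrapositive via an invariant compatible with external stabilization. Concretely, one seeks a functorial invariant of closed simply-connected smooth 4-manifolds whose behavior under $(-)\#(S^{2}\times S^{2})$ is stable in a controlled way, and uses it to show that if $X_{0}\#(S^{2}\times S^{2})$ is not diffeomorphic to $X_{1}\#(S^{2}\times S^{2})$, then $X_{0}$ and $X_{1}$ could not have been homeomorphic to begin with. The invariant $\skein_{0}^{BN}(-;\emptyset)$ is a natural candidate in this setting: the $H$-action realizes a 1-handle attachment on skein surfaces, and one might hope the $\mathbb{F}_{2}[H]$-module structure interacts predictably with $S^{2}\times S^{2}$-connect summing in a way that refines the classical Wall stabilization theorem.

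First I would prove an analogue of Proposition \ref{prop:BNneckcut} with one summand equal to $S^{2}\times S^{2}$, identifying the image of the gluing map $\skein_{0}^{BN}(X\sqcup (S^{2}\times S^{2});\emptyset)\rightarrow \skein_{0}^{BN}(X\#(S^{2}\times S^{2});\emptyset)$ on a suitable half torsion-free submodule. Next I would compute $\skein_{0}^{BN}(S^{2}\times S^{2};\emptyset)$ using the cabling directed system of Manolescu--Neithalath together with Proposition \ref{prop:forfreepart}; the localized piece $\skein_{0}^{BN_{H=1}}(S^{2}\times S^{2};\emptyset)$ is controlled by $H_{2}^{\emptyset,\times 2}(S^{2}\times S^{2})$, so what remains is to understand the $H$-torsion. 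Finally, one would compare the resulting $\mathbb{F}_{2}[H]$-module structures of $\skein_{0}^{BN}(X_{i}\#(S^{2}\times S^{2});\emptyset)$ for $i=0,1$ on candidate exotic pairs, in analogy with how Proposition \ref{prop:main} extracts exotic surfaces from primitive fillings.

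The hard part, and the reason this cannot be an actual proof, is twofold. First, $\skein_{0}^{BN_{H=1}}$ only sees relative homology data by Proposition \ref{prop:forfreepart}, so any smooth obstruction must come entirely from $H$-torsion classes, and there are currently no non-vanishing results for $H$-torsion in $\skein_{0}^{BN}$ strong enough to apply to closed 4-manifolds. Second, even if one found exotic pairs whose $\skein_{0}^{BN}$-modules differed, the conjecture predicts that their external stabilizations \emph{are} diffeomorphic, so the invariant would have to predict the collapse of this difference after connect sum with $S^{2}\times S^{2}$, which we have no functorial mechanism to enforce. A realistic intermediate goal, fully in the spirit of this paper, is therefore the weaker statement: identify exotic pairs $(X_{0},X_{1})$ for which $\skein_{0}^{BN}(X_{i}\#(S^{2}\times S^{2});\emptyset)$ agree as $\mathbb{F}_{2}[H]$-modules, providing consistency checks with Wall's conjecture rather than a proof.
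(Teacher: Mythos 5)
The statement you were asked to address is not a theorem of the paper but a well-known open conjecture (Wall's external stabilization conjecture, here stated for a single $S^{2}\times S^{2}$ summand), which the paper merely records for context inside a remark; the paper gives no proof, and in fact explicitly concedes that ``the results presented in this paper are incapable of producing results for this conjecture with $S^{2}\times S^{2}$.'' Your refusal to manufacture a proof and your honest assessment that this is beyond current technology is therefore the correct response, and it agrees with the paper's own stance.

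Your proposed line of attack via $\skein_{0}^{BN}$ is reasonable as a way of generating consistency checks, and you correctly identify the two structural obstacles: Proposition~\ref{prop:forfreepart} shows the free part of $\skein_{0}^{BN}$ is governed entirely by $H_{2}^{L,\times 2}(X)$, which is a homotopy-theoretic quantity that cannot distinguish homeomorphic $4$-manifolds, so any exotic information must live in $H$-torsion; and there is currently no mechanism forcing that torsion to die after connect summing with $S^{2}\times S^{2}$, which is what the conjecture would require. One small refinement worth noting: the paper's route around this impasse is not to attack Wall's conjecture directly but to study the \emph{double branched cover} of internally stabilized surface pairs $(X,F_{g}\sqcup S)$ versus $(X,F_{g}^{\prime}\sqcup S)$, since an internal stabilization of the branch locus corresponds to an external stabilization of the cover; the remark surrounding the conjecture is there to motivate that indirect connection, and Proposition~\ref{prop:main} is then used to show that exotic surface pairs survive connect sum with $4$-manifolds \emph{other than} $S^{2}\times S^{2}$. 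Your weaker intermediate goal (agreement of $\skein_{0}^{BN}(X_{i}\#(S^{2}\times S^{2});\emptyset)$ as $\mathbb{F}_{2}[H]$-modules for exotic $X_{0},X_{1}$) is sensible and compatible with what the paper can actually reach.
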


    The double branched cover of an internally stabilized surface corresponds to an external stabilization of its branched double cover, so such a question is natural to consider. We remark that the manifolds dealt with throughout this work generally all have non-empty boundary, and there exist many similar results on stabilization conjectures in the closed and relative case (see \cite{Lin_2023, lin2021familybauerfurutainvariantexotic, konno2022exoticcodimension1submanifolds4manifolds, kang2024stabilizationcontractible4manifolds, guth2022exotic, auckly2023smoothly}). One may also consider the effect of external stabilization on exotically knotted pairs of surfaces. Although the results presented in this paper are incapable of producing results for this conjecture with $S^{2}\times{S^{2}}$, one may at least use Proposition \ref{prop:main} to show pairs of surfaces remain exotic after connect summing with other 4-manifolds, given that the empty skein surface represents a non-zero element in the lasagna module.
\end{remark}

\begin{corollary}
    Suppose that a primitive filling $[S]\in{\skein_{0}^{BN}(X;\emptyset)}$ is skein equivalent to the empty filling $[\emptyset]$, then the exotic knotted pair $(F_{g},F_{g}^{\prime})$ remains exotic after a single internal stabilization after connect summing with $X$.
\end{corollary}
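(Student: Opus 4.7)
The plan is to apply Proposition \ref{prop:main} to the primitive filling $[S]$ and then use the hypothesis $[S]\sim [\emptyset]$ to collapse the resulting exotic pair $(F_g\sqcup S, F_g'\sqcup S)$ to the pair $(F_g,F_g')$ at the level of the Bar-Natan skein lasagna module. First I would feed $[S]$ into Proposition \ref{prop:main}: since $[S]\in \skein_0^{BN}(X;\emptyset)$ is primitive, the proposition produces an exotic pair $(F_g\sqcup S,F_g'\sqcup S)$ in $(X\setminus B^4,K_g)$ whose distinguishing element $[F_g\sqcup S]-[F_g'\sqcup S]$ is non-zero and has lasagna $H$-torsion order equal to $\operatorname{ord}_H^{(B^4,K_g)}(\delta_g^L)>1$, by Corollary \ref{cor:Haydeninterpretation} and the connect sum analysis in Proposition \ref{prop:BNneckcut}.

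Next I would translate this distinguishing element into one involving only $F_g$ and $F_g'$. Decomposing $X\setminus B^4$ as the connect sum of a small $B^4$ (containing $F_g$) with $X$ minus a ball, Proposition \ref{prop:BNneckcut} identifies $[F_g\sqcup S]$ with $(\phi_\#\circ\iota)([F_g]\otimes [S])$. Because $BN(\emptyset)\cong \mathbb{F}_2[H]$ is the ground ring, any pairwise Tor vanishes trivially, so the half torsion-free hypothesis of Proposition \ref{prop:BNneckcut} is automatic once we represent $[S]$ by the empty filling. Using $[S]=[\emptyset]$, this reduces to $(\phi_\#\circ\iota)([F_g]\otimes [\emptyset])$, and since tensoring with $[\emptyset]$ is scalar multiplication via the natural isomorphism $BN(K_g)\otimes BN(\emptyset)\cong BN(K_g)$, the image is exactly $[F_g]\in\skein_0^{BN}(X\setminus B^4;K_g)$. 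The identical argument gives $[F_g'\sqcup S]=[F_g']$.

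Combining the two steps, the element $[F_g]-[F_g']$ in $\skein_0^{BN}(X\setminus B^4;K_g)$ inherits the non-vanishing and the lasagna $H$-torsion order bound $>1$ from the distinguishing element of Proposition \ref{prop:main}. Since an internal stabilization acts by multiplication by $H$ (see Definition \ref{def:H-action} and the interpretation of the neck-cutting relation in Figure \ref{fig:2}), this shows $(F_g,F_g')$ is an exotic pair in $X\setminus B^4$ that does not become smoothly isotopic after one internal stabilization — which is precisely the claimed statement, reading ``after connect summing with $X$'' as viewing $F_g,F_g'$ as living in the $B^4$-portion of the decomposition $X=(X\setminus B^4)\cup_{S^3}B^4$.

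The main obstacle is purely bookkeeping: one must verify carefully that the empty filling serves as a two-sided identity for the connect sum gluing $\phi_\#\circ\iota$, so that the skein equivalence $[S]\sim [\emptyset]$ propagates through the construction to give the honest equality $[F_g\sqcup S]=[F_g]$ in $\skein_0^{BN}(X\setminus B^4;K_g)$. This is a direct consequence of the definition of $\mu$ and the observation that the Tor obstruction in Proposition \ref{prop:BNneckcut} is trivial when one factor is $BN(\emptyset)$, so no genuinely new input beyond the machinery already developed in the paper is required.
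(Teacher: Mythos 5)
Your proof is correct and takes the same route as the paper, which simply cites Proposition \ref{prop:main}; you have usefully spelled out the mechanism by which the hypothesis $[S]\sim[\emptyset]$ lets one replace $[F_g\sqcup S]-[F_g'\sqcup S]$ with $[F_g]-[F_g']$, and why the Tor condition in Proposition \ref{prop:BNneckcut} is automatic since $BN(\emptyset)\cong\mathbb{F}_2[H]$ is free. The only minor quibble is notational: the relevant ambient decomposition is $B^4\#X$ (equivalently $X$ with an interior ball removed, carrying $K_g$ on the new $S^3$ boundary), not the decomposition $X=(X\setminus B^4)\cup_{S^3}B^4$ as written, though the intent is clear.
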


\begin{proof}
    This is an application of Proposition \ref{prop:main}.
\end{proof}

\begin{remark}
    Toward a result for a pair of \emph{closed} exotic surfaces, we offer the following perspective. Let $(\Sigma_{1},\Sigma_{2})$ and $(\Sigma^{\prime}_{1},\Sigma^{\prime}_{2})$ be a pair of exotic knotted pairs of surfaces in $(X_{1},L)$ and $(X_{2},L)$ respectively. One may consider a different kind of 
    gluing map for a boundary connect sum
    \[\langle{\;,\;}\rangle:\skein_{0}^{BN}(X_{1}\sqcup{X_{2}};L\sqcup{L})\rightarrow{\skein_{0}^{BN}(X_{1}\natural X_{2};\emptyset)}
    \]
    that maps pairs of disjoint lasagna fillings to the filling obtained by gluing along their common $L$ boundary: $\langle{[F_{1}],[F_{2}]}\rangle=[F_{1}\cup_{L}F_{2}]\in{\skein_{0}^{BN}(X_{1}\natural X_{2};\emptyset)}$. The hope is to produce exotic knotted pairs such that $\langle{[\Sigma_{1}],[\Sigma_{1}^{\prime}]}\rangle\neq{\langle{[\Sigma_{2}],[\Sigma_{2}^{\prime}]\rangle}}$. 
    Stating anything outside of information about the tri-degree of such a map is currently beyond the state of the art.
\end{remark}

\bibliographystyle{alpha}
\bibliography{main}

\end{document}